\newtheorem{theorem}{Theorem}[section]
\newtheorem{lemma}[theorem]{Lemma}
\newtheorem{proposition}[theorem]{Proposition}
\newtheorem{corollary}[theorem]{Corollary}
\theoremstyle{remark}
\newtheorem{remark}[theorem]{Remark}
\newcommand{\F}{\mathbb{F}}
\newcommand{\Z}{\mathbb{Z}}
\newcommand{\Q}{\mathbb{Q}}
\newcommand{\C}{\mathbb{C}}
\newcommand{\GL}{\mathrm{GL}}
\newcommand{\SL}{\mathrm{SL}}
\newcommand{\Sp}{\mathrm{Sp}}
\newcommand{\Aut}{\mathrm{Aut}}
\newcommand{\trace}{\mathrm{trace}}
\newcommand{\rk}{\mathrm{rk}}
\newcommand{\ad}{\mathrm{ad}}
\newcommand{\calc}{\mathscr{C}}
\newcommand{\cals}{\mathscr{S}}
\newcommand{\calr}{\mathcal{A}}
\definecolor{grey}{rgb}{0.6,0.6,0.6}
\begin{document}

\title[The strong approximation theorem and computing 
with linear groups]{The strong approximation theorem and 
computing with linear groups}

\author{A.~S.~Detinko}
\address{School of Computer Science\\
University of St~Andrews\\
North Haugh\\
St~Andrews KY16 9SX\\
UK}
\email{ad271@st-andrews.ac.uk}

\author{D.~L.~Flannery}
\address{
School of Mathematics, Statistics and Applied Mathematics\\
National University of Ireland, Galway\\
University Road\\
Galway H91TK33\\
Ireland}
\email{dane.flannery@nuigalway.ie}

\author{A.~Hulpke}
\address{Department of Mathematics\\
Colorado State University\\
Fort Collins\\
CO 80523-1874\\
USA}
\email{Alexander.Hulpke@colostate.edu}

\footnotetext{{\sl 2010 Mathematics Subject Classification}: 
20-04, 20G15, 20H25, 68W30.}
\footnotetext{Keywords: linear group, strong approximation, 
Zariski density, algorithm, software}

\begin{abstract}
We obtain a computational realization of the strong approximation theorem. 
That is, we develop algorithms to compute all congruence quotients modulo 
rational primes of a finitely generated Zariski  dense group 
$H \leq \mathrm{SL}(n, \mathbb{Z})$ for $n \geq 2$. More generally, we 
are able to compute all congruence quotients of a finitely generated Zariski 
dense subgroup of  $\mathrm{SL}(n, \mathbb{Q})$ for $n > 2$.
\end{abstract}

\maketitle

\section{Introduction}

The \emph{strong approximation theorem} (SAT) is a 
milestone of linear group theory and its 
applications~\cite[Window~9]{LubotzkySegal}.
It has come to play a similarly important role in 
computing with linear groups~\cite{DensityFurther}. 

Let $H$ be a finitely generated subgroup of 
$\SL(n, \Z)$ that is Zariski dense in 
$\SL(n,\C)$. Then SAT asserts that $H$ is congruent to 
$\SL(n, p)$ for all but a finite number of primes $p\in \Z$. 
Therefore, we can describe the congruence 
quotients of $H$ modulo all primes. Moreover,
we can describe the congruence quotients of $H$ modulo all 
positive integers if $n>2$ (see \cite[Section~4.1]{DensityFurther}). 

The congruence quotients of $H$ provide important 
information about $H$; especially when $H$ is arithmetic, 
i.e., of finite index in $\SL(n, \Z)$. In 
that case, the set $\Pi(H)$ of all primes $p$ such that 
$H \not\equiv \SL(n, p)$ modulo $p$ 
is (apart from some exceptions for $p = 2$ and $n \leq 4$) 
the set of primes dividing the \emph{level} of $H$, 
defined to be the level of the unique maximal principal 
congruence subgroup in $H$ \cite[Section~2]{Density}. 
If $H$ is \emph{thin}, i.e., dense but of infinite index in 
$\SL(n, \Z)$, then we consider the \emph{arithmetic closure} 
$\mathrm{cl}(H)$ of $H$: this is the intersection of all arithmetic 
groups in $\SL(n, \Z)$ containing $H$~\cite[Section~3]{Density}. 
Note that $\Pi(H) = \Pi(\mathrm{cl}(H))$ determines the level 
of $\mathrm{cl}(H)$ just as it does when $H$ is arithmetic.
The level is a key component of subsequent algorithms 
for computing with arithmetic subgroups, such as membership 
testing and orbit-stabilizer algorithms~\cite{Arithm}.

In \cite[Section~3.2]{Density} and \cite{DensityFurther}, we 
developed algorithms to compute $\Pi(H)$ when $n$ is prime 
or $H$ has a known transvection. This paper presents 
a complete solution: practical algorithms  
to compute $\Pi(H)$ for arbitrary finitely generated dense 
$H \leq \SL(n, \Z)$, $n \geq 2$. We also give a characterization 
of density that allows us to compute $\Pi(H)$ without preliminary 
testing of density (although this can certainly be done; see
\cite[Section~5]{Density} and \cite{OWPreprint17}). Our methods 
extend in a straightforward manner to handle input
$H\leq \SL(n, \Q)$.

As in \cite{DensityFurther}, we rely on the classification of 
maximal subgroups of $\SL(n, p)$. Specifically, we follow the proof
of SAT in \cite[Window~9, Theorem~10]{LubotzkySegal}, which
credits C.~R.~Matthews, L.~N.~Vaserstein,
and B.~Weisfeiler.
In Section~\ref{2} we prove results about maximal subgroups of 
$\SL(n, p)$ that are needed for the main algorithms. 
Then Section~\ref{3} provides methods to compute $\Pi(H)$ for 
dense $H \leq \SL(n, \Q)$. In Section~\ref{4} we outline the 
algorithms, and in Section~\ref{5} demonstrate their practicality. 

We now fix some basic terms and notation. Let 
$S = \{g_1, \ldots , g_r\}\subseteq \SL(n,\Q)$ 
and $H=\langle S\rangle$. Then
$R$ is the ring (localization) $\frac{1}{\mu}\Z$ generated by 
the entries of the $g_i$ and $g_i^{-1}$;
here $\mu$ is a positive integer. Note that $R$ 
depends only on $H$, not on the choice of generating set $S$ for $H$. 
For $m$ coprime to $\mu$, the congruence homomorphism $\varphi_m$ 
induced by natural surjection $\Z\rightarrow \Z_m = \Z/m\Z$
maps $\SL(n, R)$ onto $\SL(n, \Z_m)$. 
Let $\Pi(H)$ be the set of all primes $p$ (not dividing 
$\mu$) such that $\varphi_p(H) \neq \SL(n, p)$. 
Overlining will denote the image modulo a prime $p$ 
of an element of $R$ or a matrix or set of matrices over $R$.
In particular, $\bar{H} = \langle \bar{S} \rangle= \varphi_p(H)$. 
If $\bar{h}\in \bar{H}$ is given as a word 
$\Pi_i{\bar{g}_{j_i}^{e_i}}$ in $\bar{S}$,  then the `lift' of 
$\bar{h}$ is its preimage $h = \Pi_i{g_{j_i}^{e_i}}$.

Throughout, $\F$ is a field, $\F_p$ is the field of size $p$,
$\mathrm{Mat}(n,\F)$ is the $\F$-algebra of $n \times n$ matrices 
over $\F$, and $1_n \in \mathrm{Mat}(n,\F)$ is the identity matrix. 
We write $\langle G \rangle_D$ for the enveloping 
algebra of $G \leq \GL(n, \F)$ over a subring $D\subseteq \F$.

\section{Maximality of subgroups in $\SL(n, p)$}
\label{2}
Let $G\leq \SL(n,p)$. We show how
to recognize when $G$ is not in any maximal 
subgroup of $\SL(n, p)$, i.e., when $G = \SL(n, p)$. 
Our approach, which characterizes maximal subgroups by means of 
the adjoint representation, is motivated 
by \cite[Window~9, Section~2]{LubotzkySegal}.

We identify the adjoint module for $\SL(n,\F)$ with the $\F$-space 
\[
\mathfrak{sl}(n,\F) =\{x\in \mathrm{Mat}(n,\F)\mid\trace(x)=0\}
\] 
of dimension $n^2-1$ on which $\SL(n,\F)$ acts by conjugation. 
Let $\mathrm{ad}:\SL(n,\F)\rightarrow \GL(n^2-1,\F)$
be the corresponding linear representation.

The set of maximal subgroups of $\SL(n, p)$ is the union of 
Aschbacher classes $\calc_1, \ldots , \calc_8, \cals$ 
(see \cite{Asch84} and \cite[p.~397]{LubotzkySegal}).  
The classes $\calc_4$ and $\calc_7$ involve tensor products, 
for which we adopt the following convention.
If $H_1\le\GL(a,\F)$ and $H_2\le\GL(b,\F)$ then 
$H_1\times H_2$ acts on $\F^a\otimes\F^b$. The 
associated matrix representation of degree $a b$ has 
$(h_1,h_2)\in H_1 \times H_2$ acting as the matrix Kronecker 
product $h_1\dot{\times} h_2$. The group generated by these 
Kronecker products is denoted $H_1\otimes H_2$.
\begin{proposition}\label{absirrasch}
Let $G$ be a proper absolutely irreducible subgroup 
of $\SL(n, p)$ such that $\ad(G)$ is irreducible. 
Then $G$ lies in a maximal subgroup in ${\calc}_6\cup \cals$.
\end{proposition}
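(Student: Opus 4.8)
The plan is to show that the hypotheses—absolute irreducibility together with irreducibility of the adjoint action—rule out membership in every Aschbacher class except $\calc_6$ and $\cals$, forcing $G$ into a maximal subgroup from $\calc_6\cup\cals$. I would organize the argument as a systematic elimination across the classes $\calc_1,\ldots,\calc_8$, using the two given representation-theoretic constraints as the two main cutting tools.

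First I would dispose of the geometric classes whose defining structure is already incompatible with absolute irreducibility of $G$ itself. Absolute irreducibility immediately kills $\calc_1$ (reducible subgroups) and $\calc_3$ (extension-field subgroups embed in $\GL(n/e,p^e)$ and fail to be absolutely irreducible over $\F_p$). It also rules out $\calc_2$ (imprimitive, i.e.\ stabilizers of a direct-sum decomposition permuted by $G$) and more generally anything preserving a nontrivial block or tensor decomposition at the level of the natural module in a way that a genuinely absolutely irreducible group cannot. The remaining work concentrates on the classes whose natural-module behaviour survives absolute irreducibility but whose \emph{adjoint} behaviour does not.

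The heart of the proof is the adjoint hypothesis. For a subgroup lying in $\calc_2$, $\calc_4$, or $\calc_7$—those carrying a tensor or tensor-induced structure $\F^a\otimes\F^b$ on the natural module—the adjoint module $\mathfrak{sl}(n,\F)$ decomposes compatibly: the enveloping algebra respects the tensor factorization, so $\ad(G)$ leaves invariant proper subspaces built from $\mathfrak{sl}(a)\otimes 1 + 1\otimes\mathfrak{sl}(b)$ and the complementary traceless tensors. Hence $\ad(G)$ is reducible, contradicting the hypothesis. The same philosophy handles $\calc_5$ (subfield subgroups, whose adjoint action is definable over a proper subfield and thus reducible after extending scalars, contradicting irreducibility of $\ad(G)$ as stated) and $\calc_8$ (classical-form subgroups such as symplectic, orthogonal, or unitary types), where the invariant form forces a decomposition of $\mathfrak{sl}(n,\F)$ into the symmetric and antisymmetric parts relative to the form—again producing a proper $\ad(G)$-invariant subspace. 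I would present each of these as a short lemma-style remark: exhibit the relevant proper invariant subspace of $\mathfrak{sl}(n,\F)$ explicitly and note that it is preserved precisely because $G$ normalizes the corresponding geometric structure.

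Having eliminated $\calc_1,\ldots,\calc_5,\calc_7,\calc_8$, what remains is $\calc_6$ (normalizers of symplectic-type $r$-groups in absolutely irreducible representations) and the class $\cals$ (almost simple groups acting absolutely irreducibly and not preserving any of the above geometries); since $G$ is assumed proper, it lies in \emph{some} maximal subgroup, and by the elimination that maximal subgroup must belong to $\calc_6\cup\cals$, which is the claim. The main obstacle I anticipate is the tensor cases $\calc_4$ and $\calc_7$: there I must verify carefully that irreducibility of $\ad(G)$ genuinely fails, i.e.\ that the traceless part of $\mathrm{End}(\F^a\otimes\F^b)$ truly splits as a $G$-module and is not somehow reassembled into an irreducible by the action, which requires tracking how $G$ sits inside $H_1\otimes H_2$ and confirming that the factor $\mathfrak{sl}(a,\F)\otimes 1_b$ (or its analogue) is a proper nonzero invariant subspace of the adjoint module. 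The borderline small-degree cases, where classes coincide or the adjoint module behaves exceptionally, will need separate attention.
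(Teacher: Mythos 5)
Your overall strategy (eliminate every other Aschbacher class, using absolute irreducibility and irreducibility of $\ad(G)$ as the two cutting tools) is the right one and is the paper's, but two of your eliminations rest on false claims. Absolute irreducibility of $G$ does \emph{not} rule out $\calc_2$ or $\calc_3$: the maximal subgroups in these classes --- the wreath products $\GL(a,p)\wr S_b$ and the normalizers $\SL(a,p^b).b$ of extension-field subgroups --- are themselves absolutely irreducible over $\F_p$ (the permutation part, respectively the Galois part, is exactly what restores absolute irreducibility), so an absolutely irreducible $G$ can perfectly well sit inside one. Both classes must instead be excluded via the adjoint hypothesis. Your later attempt to fold $\calc_2$ in with the tensor classes does not repair this, because $\calc_2$ preserves a direct-sum decomposition $V=V_1\oplus\cdots\oplus V_b$, not a tensor decomposition, so the subspace $\mathfrak{sl}(a)\otimes 1+1\otimes\mathfrak{sl}(b)$ is not the relevant object; the correct invariant subspace is the $(b-1)$-dimensional space of trace-zero block-diagonal matrices with blocks from $\{1_a,0_a,-1_a\}$, which is what the paper uses. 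For $\calc_3$ you give no valid argument at all; the paper exhibits the trace-zero part of the centralizer of the normal subgroup $N\cong\SL(a,p^b)$ (an embedded copy of $\F_{p^b}$ inside $\mathrm{Mat}(n,p)$) as a proper $\ad$-invariant subspace.

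A second, smaller problem: your reason for discarding $\calc_5$ is wrong. A subfield subgroup $\SL(n,p_0)\le\SL(n,q)$ is absolutely irreducible and its adjoint action on $\mathfrak{sl}(n,q)$ is irreducible as well, so ``definable over a proper subfield'' produces no proper $\ad(G)$-invariant subspace --- if it did, the proposition would not need to be restricted to prime fields. The correct (and only) reason $\calc_5$ disappears here is that $\F_p$ has no proper subfields, so the class is empty in the setting of the proposition. Your treatments of $\calc_1$, $\calc_4$, $\calc_7$, $\calc_8$ and the concluding step agree with the paper's.
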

\begin{proof} 
Since $G$ is absolutely irreducible, it cannot be in a 
subgroup in ${\calc}_1$. Class ${\calc}_5$ is irrelevant 
over a field of prime size. For each of the 
remaining Aschbacher classes other than ${\calc}_6$ or $\cals$, 
we identify a proper submodule $T$ of the adjoint module 
$A$ for $\SL(n,p)$.
\begin{itemize}\allowbreak
\item[${\calc}_2$.] A maximal subgroup lies in $W=\GL(a,p)\wr S_b$ 
with $n=ab$. Let $T\le A$ be the subspace spanned by block matrices 
with $b$ blocks from $\{1_a,0_a,-1_a\}$ and zero trace. Clearly $T$ 
is preserved under conjugation by $W$ and has dimension $b-1$. 
\item[${\calc}_3$.]
A maximal subgroup here has a normal subgroup $N\cong\SL(a,p^b)$ 
with $n=ab$, $1<a,b<n$. Each `entry' of $N$ is a $b\times b$ 
submatrix. The set of matrices in the center of $N$ 
with trace $0$ is a proper submodule of $A$.
\item[${\calc}_4$.]
A maximal subgroup $L$ is $\SL(a,p)\otimes\SL(b,p)$ for some 
$a, b<n$ such that $n=ab$.
If $x\in\mathfrak{sl}(a,p)$ and $y=x\dot\times 1_b$
then $\trace(y)=0$ and thus $y\in A$. Let $T$ be the space spanned 
by all such products. Then $L$ acts on $T$ by the adjoint action 
of the $\SL(a,p)$-part of elements on the $x$-components of such 
products. Thus $T\le A$ is invariant under $L$, so is a proper 
submodule of $A$.
\item[${\calc}_7$.] We use an argument similar to the preceding 
one. Here a maximal subgroup is generated by $\mathrm{Sym}(b)$ 
and $\SL(a,p)\otimes\cdots\otimes\SL(a,p)$ with $b$ factors, 
where $n=a^b$ and $1<a,b<n$.
Let $T$ be the subspace of $A$ spanned by all 
Kronecker products of length $b$ with every factor $1_a$ except 
for one, drawn from the adjoint module of $\SL(a,p)$. Then $T$ is 
invariant under action by the maximal subgroup.
\item[${\calc}_8$.] 
A maximal subgroup that stabilizes a form preserves
its own adjoint module (see, e.g., \cite[p.~398]{LubotzkySegal} 
or \cite[Section~1.4.3]{Goodman}), which cannot be $A$. \qedhere
\end{itemize}
\end{proof}
\begin{remark}
(Cf.~\cite[p.~392]{LubotzkySegal}.)
Even if $\ad(G)$ is absolutely irreducible, $G$ could 
still be in a maximal subgroup in $\calc_6$.
For example, $\SL(8,5)$ contains the maximal subgroup 
$4\circ 2^{1+6}.\Sp_6(2)\in \calc_6$ which acts 
absolutely irreducibly on $A$; see \cite[p.~399]{Bray}. 
\end{remark}

\begin{theorem}\label{criterion}
There exists a function $f$, depending only on the 
degree $n$, such that $|G|\le f(n)$ for any proper absolutely 
irreducible subgroup $G$ of $\SL(n,p)$ such that $\ad(G)$ is 
irreducible.
\end{theorem}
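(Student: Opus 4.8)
The plan is to combine Proposition~\ref{absirrasch} with the classification of finite simple groups and known lower bounds on representation degrees. By Proposition~\ref{absirrasch}, any $G$ as in the statement is contained in a maximal subgroup $M$ lying in $\calc_6 \cup \cals$, and since $|G| \le |M|$ it suffices to bound $|M|$ by a function of $n$ alone, independent of $p$.

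For the class $\calc_6$ this is elementary. Such an $M$ occurs only when $n = r^m$ for a prime $r$, and $M$ is, up to the scalar subgroup of $\SL(n,p)$ (whose order divides $n$), the normalizer of a symplectic-type $r$-group of order $r^{1+2m}$; the quotient of $M$ by this normal $r$-group together with the scalars embeds in $\GL(2m,r)$. Since $r \le n$ and $m \le \log_2 n$, every factor is bounded in terms of $n$, so $|M| \le f_6(n)$.

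For the class $\cals$, $M$ is almost simple modulo scalars with simple socle $T$; as the scalars and $|\mathrm{Out}(T)|$ contribute only bounded factors, it is enough to bound $|T|$. By the classification, $T$ is alternating, sporadic, or of Lie type. If $T = A_k$ the minimal faithful degree forces $k \le n+2$; there are only finitely many sporadic $T$; and if $T$ is of Lie type in characteristic different from $p$, the Landazuri--Seitz--Zalesskii lower bounds on cross-characteristic representation degrees bound the defining field size, and hence $|T|$, in terms of $n$. In each of these cases the bound is independent of $p$.

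The main obstacle is $T$ of Lie type in the defining characteristic $p$, where a priori $|T|$ grows with $p$ (Liebeck's general bound only gives $|M| \le p^{3n}$). Here the hypothesis that $\ad(G)$ is irreducible is decisive. Because $M \in \cals$ lies in no member of $\calc_8$, the natural module $V = \F_p^{\,n}$ is not self-dual for $G$, so in particular $V \not\cong V^*$ as $T$-modules. By Clifford theory $T$ acts on $\mathfrak{sl}(n,p) = (V \otimes V^*)_0$ with boundedly many composition factors. Analyzing these factors through their highest weights, this near-irreducibility together with non-self-duality forces $V$ to be a Frobenius twist of the natural module (or its dual) of a group of type $A_{n-1}$; that is, $T \cong \PSL(n,p)$. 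But then $G \ge T$ yields $G = \SL(n,p)$, contradicting that $G$ is proper. Hence the defining-characteristic case does not arise, and taking $f(n)$ to be the maximum of $f_6(n)$ and the bounds obtained in the $\cals$ analysis completes the argument. The delicate point, and the step I expect to require the most care, is the weight-combinatorial analysis ruling out all defining-characteristic modules other than the natural one.
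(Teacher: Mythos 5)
Your overall architecture coincides with the paper's: reduce to $\calc_6\cup\cals$ via Proposition~\ref{absirrasch}, bound the $\calc_6$ normalizers elementarily, and in $\cals$ invoke the classification together with minimal-degree bounds for alternating groups and the Seitz--Zalesskii (Landazuri--Seitz) bounds for Lie type in cross characteristic. All of that is fine. The genuine gap is exactly where you flag it: the defining-characteristic case. There your ``weight-combinatorial analysis'' showing that near-irreducibility of $(V\otimes V^*)_0$ under $T$ plus non-self-duality of $V$ forces $V$ to be a twist of the natural module is not an argument but a restatement of what must be proved; carrying it out would amount to redoing a Liebeck--Seitz-type classification of irreducible restricted/twisted tensor product modules, and the non-self-duality observation alone does not dispose of candidates such as $\SL(3,p^2)$ acting on $V_{\mathrm{nat}}\otimes V_{\mathrm{nat}}^{(p)}\subseteq\F_p^{\,9}$, which is absolutely irreducible and not self-dual and is only excluded because $\ad$ is visibly reducible on it --- i.e., by the very computation you defer. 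The paper avoids this entirely by citing \cite[p.~398]{LubotzkySegal}: a Lie-type subgroup in the defining characteristic lies in a proper \emph{connected} algebraic subgroup of $\SL_n$, whose Lie algebra is then a proper $\ad$-invariant subspace of $\mathfrak{sl}(n,p)$, directly contradicting irreducibility of $\ad(G)$ with no classification of $V$ needed. You should either quote that result or genuinely supply the module analysis.

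Two smaller points. First, ``$G\ge T$ yields $G=\SL(n,p)$'' is a slip: $G$ is merely contained in $M$ and need not contain the socle; the correct conclusion is that the preimage of $T$ in $\SL(n,p)$ would be all of $\SL(n,p)$, so $M$ itself fails to be proper. Second, your deduction that $V|_T$ is not self-dual is correct but needs the one-line justification that a $T$-invariant form would be permuted, hence preserved up to scalar, by $M=N(T)$, placing $M$ in a $\calc_8$ member.
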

\begin{proof}
(Cf.~\cite[p.~398]{LubotzkySegal}). 
By~\cite[Section~2.2.6]{Bray}, $L\leq \SL(n,p)$ in $\calc_6$
has order bounded by a function of $n$ only. 
By Proposition~\ref{absirrasch}, then, let $L\in \cals$. 
That is, $L=N_{\SL(n,p)}(K)$ with $K\le\SL(n,p)$ simple non-abelian 
and $C_L(K)=\langle 1_n\rangle$. As $L$ is embedded in $\Aut(K)$, 
a bound on $|K|$ implies a bound on $|L|$.

By the classification of finite simple groups, $K$ can be 
alternating, or of Lie type, or sporadic. Sporadic groups are 
of course bounded in order.

If $K\cong \mathrm{Alt}(k)$ then 
\cite[Theorem~5.7A, corrected]{DixonMortimer} shows that 
$n\ge\frac{2k-6}{3}$; i.e., for fixed $n$, the permutation 
degree $k$ and hence $|K|$ is bounded.

Now let $K=Y_l(r^e)$ for a Lie class $Y$, Lie rank $l$, and $r$ prime.
If $r\not=p$ then \cite[Table 1]{SeitzZalesskii} gives lower 
bounds for the smallest coprime degree $n$ in which $K$ has a 
faithful projective representation. These bounds are functions 
$a(l,r^e)$, independent of $p$, such that $a(l,r^e)\to\infty$ 
as $\l\to\infty$ or $r^e\to\infty$. Thus, in bounded degree $n$,
only a finite number (up to isomorphism) of groups $Y_l(r^e)$ 
are candidates for $K$.

If $r=p$ then \cite[p.~398]{LubotzkySegal} shows that 
$K$ and $L$ must be in a proper connected algebraic subgroup, 
and so do not act irreducibly on the adjoint module $A$.
\end{proof}

\begin{corollary}\label{criterion2}
Let $G \leq \SL(n, p)$, and let $f(n)$ be as in 
Theorem{\em ~\ref{criterion}}. 
If $\ad(G)$ is absolutely
irreducible and $|G| > f(n)$ then $G = \SL(n, p)$.
\end{corollary}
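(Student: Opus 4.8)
The plan is to argue by contradiction: assume $G \neq \SL(n,p)$, so that $G$ is a \emph{proper} subgroup, and derive $|G| \le f(n)$, contradicting the hypothesis $|G| > f(n)$. To invoke Theorem~\ref{criterion} for the proper group $G$ I must verify its two hypotheses, namely that $G$ is absolutely irreducible and that $\ad(G)$ is irreducible. The latter is immediate, since absolute irreducibility of $\ad(G)$ trivially entails irreducibility. The entire content of the argument therefore lies in deducing that $G$ itself is absolutely irreducible from the assumed absolute irreducibility of $\ad(G)$.

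I would establish this by proving the contrapositive: if $G$ is \emph{not} absolutely irreducible, then $\ad(G)$ is not absolutely irreducible either. Write $k = \overline{\F_p}$ and let $V = k^n$ be the natural module. If $G$ is not absolutely irreducible, then $V$ has a proper nonzero $kG$-submodule $U$, of dimension $d$ with $0 < d < n$. Since every $g \in G$ satisfies $gU = U$, the subspace
\[
P = \{\, x \in \mathrm{Mat}(n,k) : xU \subseteq U \,\}
\]
of matrices preserving the flag $0 \subset U \subset V$ is invariant under conjugation by $G$, and hence so is $T = P \cap \mathfrak{sl}(n,k)$, a $kG$-submodule of the adjoint module over $k$. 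A routine check shows $T$ is proper and nonzero whenever $n \ge 2$: any nonzero matrix with image contained in $U$ and $U$ contained in its kernel is nilpotent, hence trace-zero, and lies in $P$, so $T \neq 0$; while a rank-one nilpotent sending some vector of $U$ to a vector outside $U$ is trace-zero but not in $P$, so $T \subsetneq \mathfrak{sl}(n,k)$. Thus $\ad(G)$ is reducible over $k$, i.e.\ not absolutely irreducible, as required.

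With $G$ now known to be absolutely irreducible and $\ad(G)$ irreducible, Theorem~\ref{criterion} applies to the proper subgroup $G$ and yields $|G| \le f(n)$, the desired contradiction; therefore $G = \SL(n,p)$. I expect the only genuine obstacle to be the implication ``$\ad(G)$ absolutely irreducible $\Rightarrow$ $G$ absolutely irreducible''—precisely the point at which the hypothesis is strengthened from the plain irreducibility used in Theorem~\ref{criterion} to \emph{absolute} irreducibility, and everything else is a direct invocation of that theorem. One should note that the submodule $T$ constructed above is visibly proper and nonzero whether or not $p \mid n$ (in the latter case $1_n \in \mathfrak{sl}(n,k)$, but this does not affect the argument), so no separate treatment of the characteristic-dividing-$n$ case is needed.
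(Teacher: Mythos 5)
Your proof is correct and follows essentially the same route as the paper: both arguments reduce to showing that if $G$ is not absolutely irreducible then the adjoint module acquires a proper nonzero invariant subspace over $\overline{\F_p}$ (the paper uses block upper triangular matrices with diagonal $(x,0_{n_2})$, you use the trace-zero stabilizer of the flag $0\subset U\subset V$), and then both invoke Theorem~\ref{criterion}. Your verification that the subspace is proper and nonzero is somewhat more detailed than the paper's, but the underlying idea is identical.
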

\begin{proof}
Working over the algebraic closure of $\F_p$,
suppose that $G$ is block upper triangular with main diagonal 
$(G_1, G_2)$ where $G_i$ has degree $n_i<n$. 
Then $\ad(G)$ leaves invariant the subspace of the 
adjoint module consisting of all block 
upper triangular matrices with main diagonal 
$(x, 0_{n_2})$, where 
$\trace (x)=0$. Hence $G$ must be absolutely irreducible.
By Theorem~\ref{criterion}, $G = \SL(n, p)$.
\end{proof}

\begin{remark}\label{Interpretfn}
Theorem~\ref{criterion} and Corollary~\ref{criterion2} remain 
valid if we let $f(n)$ be a bound on $\mathrm{exp}(G)$, or a bound 
on the largest order of an element of $G$. 
\end{remark}

Using the formulae for the smallest representation degree of 
alternating groups, and of Lie-type groups in cross-characteristic, 
it would be possible to give a rough upper estimate of $f(n)$. We 
do not attempt this. In Section~\ref{bounds}, we instead
use the tables of \cite[Chapter~8]{Bray} to give tight values for
$f(n)$ in degrees $n\le 12$, extending the values
in~\cite[Remark~3.3]{DensityFurther}. 

\section{Realizing strong approximation computationally}\label{3}

Let $H$ be a dense subgroup of $\SL(n,R)$, $R=\frac{1}{\mu}\Z$.
By Corollary~\ref{criterion2} and Remark~\ref{Interpretfn}, if 
$\ad(\varphi_p(H))$ is absolutely irreducible and $f(n)$ is 
exceeded by $\varphi_p(H)$, then $\varphi_p(H) = \SL(n, p)$.
This result, and a well-known equivalent statement of
density, comprise the background for our main algorithm. 

Input groups for all the algorithms are finitely generated.
Sometimes we write input as a
finite generating set, or as the group itself.

\subsection{Preliminaries}

We start by giving two auxiliary procedures. 
\subsubsection{Bounded order test} 
The first auxiliary procedure is a slight generalization of 
the one in \cite[Section~2.1]{DensityFurther}.
\begin{lemma}\label{exp11}
If $k$ is a positive integer and 
$H \leq \GL(n, R)$ is infinite, then 
$\varphi_p(H)$ has an element of order greater than $k$
for almost all primes $p$.
\end{lemma}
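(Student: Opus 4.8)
The plan is to reduce the statement to a single element of infinite order and then bound the set of primes at which its reduction can have small order. First I would observe that an infinite subgroup of $\GL(n,R)\leq\GL(n,\Q)$ cannot be a torsion group: by Schur's theorem the torsion subgroups of $\GL(n,\Q)$ are locally finite, and by Minkowski's bound every finite subgroup of $\GL(n,\Q)$ has order dividing a fixed integer $M(n)$, so a locally finite group with finite subgroups of bounded order is itself finite. (Alternatively, since the input group is finitely generated, one may invoke the fact that a finitely generated periodic linear group is finite.) Hence $H$ contains an element $h$ of infinite order, so that $h^{j}\neq 1_n$ for every $j\geq 1$.

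Next I would analyze the order of $\varphi_p(h)$. For $p$ coprime to $\mu$ the homomorphism $\varphi_p$ is defined on $\GL(n,R)$ and satisfies $\varphi_p(h)^{j}=\varphi_p(h^{j})$. Thus $\varphi_p(h)$ has order at most $k$ exactly when $\varphi_p(h^{j})=1_n$ for some $j\in\{1,\ldots,k\}$. For each such $j$ the matrix $h^{j}-1_n$ is nonzero, so it has an entry of the form $a_{j}/\mu^{N_{j}}$ with $a_{j}$ a nonzero integer. Since $\mu$ is a unit modulo $p$, this entry reduces to $0$ in $\F_p$ precisely when $p\mid a_{j}$; in particular $\varphi_p(h^{j})=1_n$ forces $p\mid a_{j}$.

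Finally I would collect the exceptional primes. For each $j\in\{1,\ldots,k\}$ the condition $\varphi_p(h^{j})=1_n$ holds only for $p$ among the finitely many prime divisors of the nonzero integer $a_{j}$. Taking the union over the finitely many values $j=1,\ldots,k$, together with the divisors of $\mu$, yields a finite set $\Sigma$ of primes. For every $p\notin\Sigma$ we have $\varphi_p(h)^{j}\neq 1_n$ for all $j\leq k$, so $\varphi_p(h)$, and therefore $\varphi_p(H)$, has an element of order greater than $k$; this establishes the claim for almost all primes $p$. The one genuinely nontrivial input is the existence of the infinite-order element $h$; once $h$ is in hand the remainder is an elementary divisibility count, and the generalization over \cite[Section~2.1]{DensityFurther} amounts simply to taking the finite union over all $j\leq k$ for an arbitrary bound $k$.
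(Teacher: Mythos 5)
Your proof is correct and follows essentially the same route as the paper, which simply defers to the argument of \cite[Section~2.1]{DensityFurther}: produce an element $h\in H$ of infinite order (an infinite subgroup of $\GL(n,\Q)$ cannot be torsion), then note that for each $j\le k$ the nonzero matrix $h^j-1_n$ can vanish modulo only the finitely many primes dividing a fixed nonzero integer, and take the finite union over $j$. Both of your justifications for the existence of $h$ (Schur plus Minkowski's bound, or the finite generation of $H$ assumed throughout the paper) are valid.
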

\begin{proof}
The proof is the same as in \cite[Section~2.1]{DensityFurther}.
\end{proof}
\begin{lemma}\label{exp1}
Suppose that $H \leq \SL(n, R)$ and $\varphi_p(H) = \SL(n, p)$
for some prime $p$. If $n \geq 3$
or $p>2$ then $H$ is infinite. 
\end{lemma}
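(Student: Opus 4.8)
The plan is to prove the contrapositive: if $H \leq \SL(n, R)$ is finite, then $\varphi_p(H) = \SL(n,p)$ forces $n = 2$ and $p = 2$. The key observation is that a finite group $H$ cannot surject onto a large group, so I need a lower bound on $|\SL(n,p)|$ and an upper bound on $|H|$ that together rule out the surjection except in the stated exceptional case.

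First I would bound $|H|$ independently of $p$. Since $H$ is finite and $H \leq \SL(n, R)$, its order is bounded in terms of $n$ alone: a classical theorem (Minkowski, or Schur's bound on finite subgroups of $\GL(n,\Q)$) gives a function $M(n)$ with $|H| \leq M(n)$ for every finite $H \leq \GL(n,\Q)$, hence for every finite $H \leq \SL(n,R)$ since $R \subseteq \Q$. In particular $|H|$ does not grow with $p$. On the other hand, $|\SL(n,p)| = \frac{1}{\gcd(n,p-1)} \cdot p^{\binom{n}{2}} \prod_{i=2}^{n}(p^i - 1)$ grows without bound as $p \to \infty$, and for each fixed $n \geq 2$ is already larger than $M(n)$ for all sufficiently large $p$. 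This immediately handles almost all primes; the real content is to eliminate the finitely many small primes, and in particular to isolate exactly the pair $(n,p) = (2,2)$.

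For the remaining cases I would argue directly. If $\varphi_p(H) = \SL(n,p)$ then $|H| \geq |\SL(n,p)|$, so it suffices to show $M(n) < |\SL(n,p)|$ whenever $n \geq 3$ or $p > 2$. Equivalently, the only way a finite $H$ can have $\varphi_p(H) = \SL(n,p)$ is if $\SL(n,p)$ is itself small enough to be a finite subgroup of $\SL(n,\Q)$ modulo reduction. The case $n=2$, $p=2$ is genuinely exceptional because $\SL(2,2) \cong S_3$ has order $6$ and is realized as a finite subgroup of $\SL(2,\Z)$; so here $H$ finite is possible. To complete the argument I would verify that for $n = 2$, $p > 2$ the group $\SL(2,p)$ has order $p(p^2-1) \geq 24$ and, more importantly, contains elements of order $p$ for $p \geq 3$, which cannot be lifted to torsion elements compatible with $M(2)$; and for $n \geq 3$ the order $|\SL(n,p)| \geq |\SL(3,2)| = 168$ already forces a contradiction since the finite subgroups of $\SL(n,\Z)$ in low degree are too small.

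The main obstacle will be making the small-prime elimination clean rather than case-chasing. Rather than comparing raw orders, a more structural route — which I expect to be the intended one — is to use unipotent elements: for $p$ a prime with $n \geq 3$ or $p > 2$, the group $\SL(n,p)$ contains a transvection of order $p$, and more generally elements whose orders force $p \mid |H|$ for unboundedly many $p$, contradicting finiteness via Lemma~\ref{exp11} (which says $\varphi_p(H)$ has elements of order exceeding any fixed $k$ for almost all $p$ when $H$ is infinite, and contrapositively constrains finite $H$). The cleanest formulation is: a finite linear group over $\Q$ has element orders bounded by a function of $n$, so if $\varphi_p(H) = \SL(n,p)$ contains an element of order $p$ (a transvection, which exists precisely when the Sylow $p$-subgroup of $\SL(n,p)$ is nontrivial, i.e. always for $n \geq 2$), then some element of $H$ must reduce to it; tracking orders through reduction shows this is incompatible with $|H| < \infty$ unless the relevant order $p$ is absorbed into the exceptional torsion available in degree $2$ at $p = 2$. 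Pinning down exactly why $(2,2)$ escapes — and nothing else does — is where the care lies.
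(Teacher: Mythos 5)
Your overall strategy -- bound the torsion of $\GL(n,\Q)$ by a function $M(n)$ of $n$ alone and play it off against $|\SL(n,p)|$ -- is the right one, and it is consistent with the paper, whose ``proof'' is only a citation of \cite[Lemma~2.1]{DensityFurther} together with the remark that a finite subgroup of $\SL(n,R)$ can be conjugated into $\SL(n,\Z)$. But as written your argument has a genuine gap at exactly the point you flag: the inequality $M(n)<|\SL(n,p)|$ for every pair with $n\ge 3$ or $p>2$ is asserted, not proved, and the one justification you offer for $n\ge 3$ is wrong. You compare $|\SL(n,p)|\ge|\SL(3,2)|=168$ against ``finite subgroups of $\SL(n,\Z)$ in low degree,'' but the lemma is for all $n$, and finite subgroups of $\SL(n,\Z)$ grow with $n$: the group of signed permutation matrices of determinant $1$ has order $2^{n-1}n!$, which dwarfs $168$ already for modest $n$. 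The comparison that is actually needed is between two quantities both growing in $n$, namely Minkowski's bound $M(n)$ (of size roughly $2^{O(n\log n)}$) and $|\SL(n,2)|>2^{n^2-n}$, together with a finite check of small $(n,p)$; none of that is carried out.

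Your ``structural route'' via transvections can be made to work, but again the part you defer is the part that needs doing. If $H$ is finite and $\varphi_p(H)=\SL(n,p)$, then $H$ contains an element whose image is a transvection, hence an element of order exactly $p$, and an element of order $p$ in $\GL(n,\Q)$ forces $p-1\le n$ (its minimal polynomial is divisible by the $p$-th cyclotomic polynomial). This cleanly kills all $p>n+1$ -- note that it does not need Lemma~\ref{exp11}, which concerns infinite $H$ and gives you nothing here. But it leaves every prime $p\le n+1$, for every $n$, untreated; these are precisely the cases where the exception $(n,p)=(2,2)$ must be isolated. One way to close them: $\SL(n,p)$ contains an elementary abelian subgroup of rank $\lfloor n^2/4\rfloor$, while a faithful rational representation of $(\Z/p)^r$ has degree at least $r(p-1)$, which reduces everything to the four pairs $(2,2),(2,3),(3,2),(4,2)$ and an order comparison with $M(n)$ in each. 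Without some such step the proof is not complete.
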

\begin{proof}
See \cite[Lemma~2.1]{DensityFurther}; a finite subgroup of 
$\SL(n, R)$ can be conjugated into $\SL(n, \Z)$.
\end{proof}

The procedure {\tt PrimesForOrder}$(H, k)$ accepts an infinite 
subgroup $H \leq \GL(n, R)$ and a positive integer $k$, and returns 
the finite set of all primes $p$ such that $\varphi_p(H)$ has 
maximal element order at most $k$.
This output obviously contains all primes $p$ such that 
$|\varphi_p(H)|\leq k$.

\subsubsection{Testing absolute irreducibility}

For this subsection, we refer to \cite[p.~401]{Tits} and 
\cite[Section 3.2]{Density}.

Let $N$ be the normal closure $\langle X \rangle^H$
where $X$ is a finite subset of a finitely 
generated group $H \leq \GL(n, \F)$. 
The procedure {\tt BasisAlgebraClosure}$(X, S)$ 
computes a basis $\{A_1, \ldots, A_m\}$ of 
$\langle N \rangle_{\F}$, thereby deciding whether $N$ is 
absolutely irreducible, i.e., whether $m = n^2$. 

The procedure {\tt PrimesForAbsIrreducible} from 
\cite[Section~2.2]{DensityFurther} will operate in the same 
way for absolutely irreducible $H \leq \GL(n, R)$: it
accepts a generating set $S$ of $H$, and returns the (finite) 
set of primes $p$ such that $\varphi_p(H)$ is not absolutely 
irreducible. The first step is to compute a basis of 
$\langle H \rangle_{\Q}$. By making a small adjustment,
we get {\tt PrimesForAbsIrreducible}$(X, S)$; for absolutely 
irreducible $N=\langle X\rangle^H$, it returns the primes $p$ 
such that $\varphi_p(N)$ is not absolutely irreducible.

If $\bar{H} = \varphi_p(H)$ is absolutely irreducible 
(e.g., $\bar{H} = \SL(n, p)$) and 
$\{\bar{A}_1, \ldots, \bar{A}_{n^2}\}$ is a basis of 
$\langle \bar{H} \rangle_{\F_p}$, then $H$ is absolutely 
irreducible and $\{A_1, \ldots, A_{n^2}\}$ is a basis of 
$\langle H \rangle_{\Q}$. Thus, we can simplify
{\tt PrimesForAbsIrreducible} by computing a basis of the 
enveloping algebra over a finite field and then lifting it 
to a basis of $\langle H \rangle_{\Q}$ 
(cf.~\cite[Section 2.2]{DensityFurther}). 
 
\subsection{Density and strong approximation}

Now we give elementary proofs of some properties of dense groups, 
including strong approximation  
(cf.~\cite{Lubotzky97}, \cite[Theorem 9, p.~396]{LubotzkySegal}, 
and \cite[Corollary~3.10]{DensityFurther}). 

The following is fundamental.
\begin{proposition}[{\cite[p.~22]{Rivin1}}]\label{Rivin}
A subgroup $H$ of $\SL(n, \mathbb{C})$ is dense if and only if 
$H$ is infinite and $\ad(H)$ is absolutely irreducible.
\end{proposition}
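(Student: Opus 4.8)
The plan is to work throughout with the Zariski closure $G := \overline{H}$ of $H$ in $\SL(n,\mathbb{C})$, which is an algebraic subgroup containing $H$ as a dense subgroup. The bridge that makes both implications go through is that $\ad(H)$ and $\ad(G)$ have exactly the same invariant subspaces: for any subspace $V \subseteq \mathbb{C}^{n^2-1}$ the set $\{g \in \SL(n,\mathbb{C}) : \ad(g)V \subseteq V\}$ is a Zariski-closed subgroup, so if it contains $H$ it also contains $\overline{H} = G$. Hence $\ad(H)$ is irreducible if and only if $\ad(G)$ is, and since we are over $\mathbb{C}$ there is no distinction between irreducible and absolutely irreducible.

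For the forward direction, suppose $H$ is dense, so $G = \SL(n,\mathbb{C})$. Since every finite subset of the infinite group $\SL(n,\mathbb{C})$ is Zariski closed, a finite dense subgroup would equal $\SL(n,\mathbb{C})$, a contradiction; thus $H$ is infinite. The adjoint action of $\SL(n,\mathbb{C})$ on $\mathfrak{sl}(n,\mathbb{C})$ is irreducible, this being the standard fact that the simple Lie algebra $\mathfrak{sl}(n,\mathbb{C})$ has no proper nonzero ideal and hence no proper nonzero subspace invariant under the adjoint action of the group. By the bridge above, $\ad(H)$ is therefore irreducible as well.

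For the reverse direction, assume $H$ is infinite and $\ad(H)$ is absolutely irreducible; I must show $G = \SL(n,\mathbb{C})$. By the bridge, $\ad(G)$ acts irreducibly on $\mathfrak{sl}(n,\mathbb{C})$. The decisive observation is that the Lie algebra $\mathfrak{g}$ of $G$ is an $\ad(G)$-invariant subspace of $\mathfrak{sl}(n,\mathbb{C})$, because the adjoint action of $G$ on its own Lie algebra is precisely the restriction of conjugation on $\mathrm{Mat}(n,\mathbb{C})$, and this action preserves $\mathfrak{g}$. Since $H$ is infinite, its closure $G$ has positive dimension, so $\mathfrak{g} \neq 0$. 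Irreducibility of $\ad(G)$ then forces $\mathfrak{g} = \mathfrak{sl}(n,\mathbb{C})$, whence $\dim G = n^2 - 1 = \dim \SL(n,\mathbb{C})$. As $\SL(n,\mathbb{C})$ is connected, a Zariski-closed subgroup of full dimension must be the whole group, so $G = \SL(n,\mathbb{C})$, i.e.\ $H$ is dense.

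The main obstacle is the reverse direction, and specifically the step identifying $\mathfrak{g}$ as a nonzero $\ad(G)$-invariant subspace: this is where the hypothesis that $H$ is infinite is consumed (to ensure $\mathfrak{g}\neq 0$) and where absolute irreducibility is deployed (to force $\mathfrak{g}$ to fill all of $\mathfrak{sl}(n,\mathbb{C})$). The forward implication is routine once one grants irreducibility of the adjoint representation of $\SL(n,\mathbb{C})$.
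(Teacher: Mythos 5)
Your argument is correct. Note that the paper offers no proof of this proposition at all---it is quoted from Rivin's preprint---so there is nothing internal to compare against; but your proof is the standard one and almost certainly the intended argument: pass to the Zariski closure $G=\overline{H}$ via the observation that a subspace of $\mathfrak{sl}(n,\C)$ is $\ad(H)$-invariant iff it is $\ad(G)$-invariant (stabilizers of subspaces are closed subgroups), use simplicity of $\mathfrak{sl}(n,\C)$ for the forward direction, and for the converse use that the Lie algebra of $G$ is a nonzero (since an infinite algebraic group over $\C$ has positive dimension) $\ad(G)$-invariant subspace, forcing $\dim G=\dim\SL(n,\C)$ and hence $G=\SL(n,\C)$ by connectedness. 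All the individual steps you invoke (finite subsets are closed, zero-dimensional algebraic groups are finite, a closed subgroup of full dimension in a connected group is the whole group) are standard and correctly applied.
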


Let $H$ be a finitely generated subgroup of $\SL(n,R)$.
\begin{lemma}\label{CongAdIsAdCong}
$\varphi_p(\ad(H)) = \ad(\varphi_p(H))$ for all primes $p$ 
(coprime to $\mu$).
\end{lemma}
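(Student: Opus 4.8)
The plan is to deduce the claimed equality of subgroups from the pointwise functional identity
\[
\varphi_p(\ad(g)) = \ad(\varphi_p(g)) \qquad \text{for every } g \in \SL(n, R),
\]
where on the left $\varphi_p$ reduces the $(n^2-1)\times(n^2-1)$ matrix $\ad(g)$ entrywise. Since $\ad$ and (entrywise) $\varphi_p$ are both group homomorphisms, this identity says that the two homomorphisms $\varphi_p\circ\ad$ and $\ad\circ\varphi_p$ on $\SL(n,R)$ agree. Writing $H=\langle S\rangle$ and using $\ad(\langle S\rangle)=\langle\ad(S)\rangle$ together with $\varphi_p(\langle T\rangle)=\langle\varphi_p(T)\rangle$, the identity applied to the elements of $S$ gives
\[
\varphi_p(\ad(H)) = \langle \varphi_p(\ad(S))\rangle = \langle\ad(\varphi_p(S))\rangle = \ad(\varphi_p(H)),
\]
as required. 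So everything reduces to the pointwise identity.

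To establish it, I would fix the integral basis of the adjoint module consisting of the off-diagonal matrix units $E_{ij}$ ($i\neq j$) together with the diagonal differences $E_{ii}-E_{i+1,i+1}$ ($1\le i\le n-1$). These lie in $\mathrm{Mat}(n,\Z)$ and form a basis of $\mathfrak{sl}(n,\F)$ for every field $\F$, since $\trace\colon\mathrm{Mat}(n,\F)\to\F$ is always surjective and so has kernel of dimension $n^2-1$. With respect to this basis the matrix $\ad(g)$, which represents $x\mapsto gxg^{-1}$, has entries that are polynomials with integer coefficients in the entries of $g$ and of $g^{-1}$ (the relevant change of coordinates back into the chosen basis is integral); and since $\det g=1$, the entries of $g^{-1}$ are themselves integer-coefficient polynomials, namely the cofactors/adjugate, in the entries of $g$. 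Hence each entry of $\ad(g)$ is an integer-coefficient polynomial in the entries of $g$.

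Because $p$ is coprime to $\mu$, the reduction $\varphi_p\colon R\to\F_p$ is a well-defined ring homomorphism, and ring homomorphisms commute with evaluation of integer-coefficient polynomials. Applying $\varphi_p$ entrywise to $\ad(g)$ therefore produces the matrix obtained by evaluating the same polynomials at $\varphi_p(g)$, which — using that the chosen integral basis reduces mod $p$ to the corresponding basis of $\mathfrak{sl}(n,\F_p)$, and that $\varphi_p(g^{-1})=\varphi_p(g)^{-1}$ — is exactly $\ad(\varphi_p(g))$. This yields the pointwise identity and completes the argument. The only point requiring care, and hence the main (though routine) obstacle, is verifying that the chosen basis remains a basis after reduction, including when $p\mid n$, where $\mathfrak{sl}(n,\F_p)$ contains $1_n$ and the trace form degenerates; the explicit integral basis above sidesteps this, since its reduction is visibly linearly independent regardless of $\mathrm{char}\,\F_p$.
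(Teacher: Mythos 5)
Your proof is correct. The paper states Lemma~\ref{CongAdIsAdCong} without proof, treating it as routine, so there is no argument of the authors' to compare against; your write-up supplies exactly the verification being taken for granted. The reduction to the pointwise identity $\varphi_p(\ad(g))=\ad(\varphi_p(g))$ is sound, and your choice of the explicit integral basis $\{E_{ij}\ (i\neq j),\ E_{ii}-E_{i+1,i+1}\}$ of $\mathfrak{sl}(n,\cdot)$ correctly handles the one point that genuinely needs care --- that the matrix of $\ad(g)$ can be taken with integer-polynomial entries in the entries of $g$ and that the basis survives reduction even when $p\mid n$.
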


\begin{corollary}\label{7}
If $\ad(H)$ is absolutely irreducible then $\ad(\varphi_p(H))$ is absolutely 
irreducible for almost all primes $p$.
\end{corollary}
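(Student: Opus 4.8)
The plan is to transfer the question about $\ad(\varphi_p(H))$ into one about the congruence reductions of the single finitely generated linear group $\ad(H)$, and then to apply the reduction-modulo-$p$ argument already behind {\tt PrimesForAbsIrreducible}. By Lemma~\ref{CongAdIsAdCong} we have $\ad(\varphi_p(H)) = \varphi_p(\ad(H))$ for every prime $p$ coprime to $\mu$, so it suffices to prove that $\varphi_p(\ad(H))$ is absolutely irreducible for all but finitely many $p$. First I would record that $\ad(H)$ is itself a finitely generated subgroup of $\GL(n^2-1, R)$: the entries of $\ad(g)$ are bilinear expressions in the entries of $g$ and $g^{-1}$, hence lie in $R = \frac{1}{\mu}\Z$, and the images of a generating set of $H$ generate $\ad(H)$.

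Next, I would make the absolute irreducibility of $\ad(H)$ explicit at the level of its enveloping algebra. Absolute irreducibility over $\C$ is equivalent to $\langle \ad(H)\rangle_{\Q} = \mathrm{Mat}(n^2-1, \Q)$, so one can choose $(n^2-1)^2$ elements $h_1, \ldots, h_{(n^2-1)^2}$ of $\ad(H)$ that form a $\Q$-basis of this matrix algebra. Each $h_i$ is a product of generators of $\ad(H)$ and therefore has entries in $R$. Stacking the `vectorizations' of the $h_i$ as the rows of a square matrix $M$, invertibility over $\Q$ gives $\det M \neq 0$; since every entry of $M$ lies in $R$, we also have $\det M \in R = \frac{1}{\mu}\Z$.

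Finally, I would conclude by a divisibility count. The numerator of $\det M \in \frac{1}{\mu}\Z$ has only finitely many prime divisors; discard these together with the divisors of $\mu$. For every remaining prime $p$ we have $\varphi_p(\det M) = \det \varphi_p(M) \neq 0$ in $\F_p$, so the reductions $\varphi_p(h_1), \ldots, \varphi_p(h_{(n^2-1)^2})$ are $\F_p$-linearly independent and hence span $\mathrm{Mat}(n^2-1, \F_p) = \langle \varphi_p(\ad(H))\rangle_{\F_p}$. Thus $\varphi_p(\ad(H))$ is absolutely irreducible for almost all $p$, which is what we needed. The one point requiring care --- the main obstacle --- is the passage in the second step from absolute irreducibility over $\C$ to a spanning set realized by genuine group elements of $\ad(H)$ with entries in $R$ (rather than by abstract $\Q$- or $\C$-linear combinations); granting this, the reduction of the enveloping algebra and the determinant estimate are routine.
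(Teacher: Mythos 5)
Your proposal is correct and follows the route the paper intends: reduce via Lemma~\ref{CongAdIsAdCong} to the statement that congruence images of the finitely generated absolutely irreducible group $\ad(H)\leq\GL(n^2-1,R)$ are absolutely irreducible for almost all $p$, which is exactly the enveloping-algebra/determinant argument underlying {\tt PrimesForAbsIrreducible}. The point you flag as delicate is harmless, since by Burnside the enveloping algebra over $\Q$ is spanned by the group elements themselves, so a basis of $\mathrm{Mat}(n^2-1,\Q)$ can be chosen from among them.
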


\begin{lemma}\label{absIrr}  
If $\varphi_p(H) = \SL(n, p)$ then $\ad(H)$ is absolutely irreducible. 
\end{lemma}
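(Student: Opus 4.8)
The plan is to reduce the claim to absolute irreducibility of the adjoint module of $\SL(n,p)$ over $\F_p$ and then lift this back to characteristic $0$. First I would feed the hypothesis $\varphi_p(H)=\SL(n,p)$ into Lemma~\ref{CongAdIsAdCong}, obtaining
\[
\varphi_p(\ad(H)) \;=\; \ad(\varphi_p(H)) \;=\; \ad(\SL(n,p)).
\]
Thus the mod-$p$ image of the linear group $\ad(H)\le\GL(n^2-1,R)$ is precisely the adjoint image of the full special linear group over $\F_p$.

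Next I would invoke the lifting principle recorded above for {\tt PrimesForAbsIrreducible}: if the reduction $\varphi_p(G)$ of a subgroup $G\le\GL(m,R)$ is absolutely irreducible, then so is $G$, and a basis of $\langle\varphi_p(G)\rangle_{\F_p}$ lifts to a basis of $\langle G\rangle_\Q$. Applying this with $G=\ad(H)$ and $m=n^2-1$, it suffices to prove that $\ad(\SL(n,p))$ is absolutely irreducible, i.e.\ that $\SL(n,p)$ acts absolutely irreducibly by conjugation on $\mathfrak{sl}(n,\F_p)$. For this representation-theoretic step I would argue directly from the fact that $\SL(n,p)$ contains all transvections, hence all root subgroups and the full diagonal torus. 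Decomposing $\mathfrak{sl}(n,\overline{\F_p})$ into torus weight spaces — the one-dimensional root spaces spanned by the $E_{ij}$ with $i\neq j$, together with the Cartan subspace of diagonal trace-zero matrices — one checks that any nonzero invariant subspace is a sum of weight spaces, and that the root-subgroup actions force every root space to occur and then pin down the Cartan part. This gives absolute irreducibility, and the lifting step completes the proof.

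The main obstacle is precisely this last step in the degenerate characteristic. When $p\mid n$ the scalar matrices $\lambda 1_n$ satisfy $\trace(\lambda 1_n)=n\lambda=0$, so $\langle 1_n\rangle$ is a one-dimensional invariant subspace of $\mathfrak{sl}(n,\F_p)$ and $\ad(\SL(n,p))$ fails to be irreducible; the lifting principle then does not apply on the nose. I would therefore handle $p\mid n$ separately. The clean resolution is to restrict to the all-but-finitely-many primes with $p\nmid n$, which is enough for the intended density applications. Alternatively one can run the weight-space analysis on the irreducible quotient $\mathfrak{psl}(n,\F_p)$ and combine it with the natural-representation fullness $\langle H\rangle_\Q=\mathrm{Mat}(n,\Q)$ — itself supplied by the same lifting principle, since $\SL(n,p)$ is absolutely irreducible in its natural action for every $p$ — to recover absolute irreducibility of $\ad(H)$ over $\Q$.
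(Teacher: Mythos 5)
Your argument is the same as the paper's: apply Lemma~\ref{CongAdIsAdCong} to identify $\varphi_p(\ad(H))$ with $\ad(\SL(n,p))$, quote absolute irreducibility of the latter, and lift via the enveloping-algebra principle. The paper's proof consists of exactly these three steps with no further detail, so your weight-space verification of the middle step is added detail rather than a departure. More importantly, your observation about the degenerate characteristic is correct and exposes a gap in the paper's own proof: when $p\mid n$ the line $\F_p 1_n$ lies inside $\mathfrak{sl}(n,\F_p)$ and is a trivial submodule, so $\ad(\SL(n,p))$ is \emph{not} irreducible and the clause ``the latter is absolutely irreducible'' fails for such $p$. Indeed the statement itself is false at $n=p=2$: the matrices $A=\left(\begin{smallmatrix}2&1\\1&1\end{smallmatrix}\right)$ and $w=\left(\begin{smallmatrix}0&1\\-1&0\end{smallmatrix}\right)$ satisfy $wAw^{-1}=A^{-1}$, so $\langle A,w\rangle$ is an infinite subgroup of the normalizer of a torus (hence not dense, hence $\ad$ not absolutely irreducible by Proposition~\ref{Rivin}), yet it surjects onto $\SL(2,2)$.

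That said, neither of your two proposed repairs actually closes the case $p\mid n$, $p>2$, and this case cannot simply be discarded: the lemma is invoked in Proposition~\ref{11} to deduce (iii)$\Rightarrow$(i) from a \emph{single} prime $p>2$, which may divide $n$. Restricting to $p\nmid n$ therefore weakens that implication (it is harmless only for ``almost all primes'' uses). The $\mathfrak{psl}$ alternative does not work as described: if the mod-$p$ adjoint module has composition factors of dimensions $1$ and $n^2-2$, then the enveloping algebra of $\ad(\varphi_p(H))$ over $\F_p$ has dimension at most $1+(n^2-2)^2+(n^2-2)<(n^2-1)^2$, so reduction mod $p$ only bounds $\dim_\Q\langle\ad(H)\rangle_\Q$ from below by a number strictly smaller than $(n^2-1)^2$; and absolute irreducibility of $H$ in its natural representation cannot supply the missing dimensions, since the $n=p=2$ example above is absolutely irreducible in degree $2$. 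In summary: same route as the paper, a genuine and correctly diagnosed obstruction at $p\mid n$, but no complete argument for those primes --- a defect your write-up shares with, and honestly flags in, the source.
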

\begin{proof}
By Lemma~\ref{CongAdIsAdCong},  $\varphi_p(\ad(H)) = \ad(\SL(n, p))$. 
Since the latter is absolutely irreducible,
its preimage $\ad(H)$ is too. 
\end{proof}

\begin{proposition}\label{11} 
The following are equivalent.
\begin{itemize}
\item[{\rm (i)}] $H$ is dense.
\item[{\rm (ii)}] $H$ surjects onto $\SL(n, p)$ for almost all primes $p$.
\item[{\rm (iii)}] $H$ surjects onto $\SL(n, p)$ for some prime $p > 2$.
\end{itemize}
\end{proposition}
\begin{proof}
Suppose that (i) holds. Then by Lemma~\ref{exp11}, Proposition~\ref{Rivin}, 
and Corollary~\ref{7}, $\ad(\varphi_p(H))$ is absolutely irreducible and 
$|\varphi_p(H)|>f(n)$ for almost all primes $p$. 
By Corollary~\ref{criterion2}, $\varphi_p(H) = \SL(n, p)$ 
for such $p$.
 
Suppose that (iii) holds. By Lemma~\ref{absIrr},
 $\ad(H)$ is absolutely irreducible, and by Lemma~\ref{exp1}, 
$H$ is infinite. Therefore $H$ is dense by Proposition~\ref{Rivin}. 
\end{proof}

\section{The main algorithms}
\label{4}

In this section we combine results from Sections~\ref{2} 
and \ref{3} to obtain the promised algorithms to compute
$\Pi(H)$ for dense groups $H$.
These consist of the main procedure, a variation 
aimed at improved performance, and an alternative that 
could be preferable in certain degrees.

Our main procedure, based on 
Corollary~\ref{criterion2}, follows.

\vspace{7pt}

\begin{obeylines}
{\tt PrimesNonSurjectiveSL}

\vspace{2pt}

Input: a finite generating set of a dense group  $H \leq \SL(n, R)$.
Output: $\Pi(H)$.

\vspace{2pt}

1. $\mathcal{P} :=$ {\tt PrimesForOrder}$(H,f(n))\, \cup$ {\tt PrimesForAbsIrreducible}$(\ad(H))$.
2. Return $\{p\in \mathcal{P}\mid \varphi_p(H)\not=\SL(n,p)\}$.
\end{obeylines}

\vspace{9pt}

\noindent Step~2 is performed via standard methods for matrix groups over 
finite fields (e.g., as in \cite{gaprecog}).
\begin{proposition}\label{term}
{\tt PrimesNonSurjectiveSL} 
returns $\Pi(H)$ for dense input $H$.
\end{proposition}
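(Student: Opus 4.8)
The plan is to establish two things: that the algorithm \texttt{PrimesNonSurjectiveSL} terminates, and that the set it returns is exactly $\Pi(H)$. Termination is immediate once I observe that both auxiliary procedures invoked in Step~1 return \emph{finite} sets of primes. For \texttt{PrimesForAbsIrreducible}$(\ad(H))$ this is built into its specification; for \texttt{PrimesForOrder}$(H, f(n))$ I need $H$ to be infinite so that the procedure applies, but this follows because $H$ is dense, hence infinite by Proposition~\ref{Rivin}. Thus $\mathcal{P}$ is finite, and Step~2 simply filters a finite set by an effective membership test, so the algorithm halts.

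For correctness, I would argue $\Pi(H) \subseteq \mathcal{P}$, since then Step~2 — which tests exactly the condition $\varphi_p(H) \neq \SL(n,p)$ defining $\Pi(H)$ — returns precisely those $p \in \mathcal{P}$ lying in $\Pi(H)$, and combined with $\Pi(H)\subseteq\mathcal{P}$ this yields equality. So the crux is the containment $\Pi(H)\subseteq \mathcal{P}$, which I would prove contrapositively: if $p\notin \mathcal{P}$, then $\varphi_p(H)=\SL(n,p)$, i.e.\ $p\notin\Pi(H)$. Suppose $p\notin\mathcal{P}$. Then $p$ is in neither set from Step~1. Being outside \texttt{PrimesForOrder}$(H,f(n))$ means $\varphi_p(H)$ has an element of order exceeding $f(n)$, so in particular $|\varphi_p(H)|>f(n)$. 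Being outside \texttt{PrimesForAbsIrreducible}$(\ad(H))$ means $\varphi_p(\ad(H))$ is absolutely irreducible; by Lemma~\ref{CongAdIsAdCong} this equals $\ad(\varphi_p(H))$, so $\ad(\varphi_p(H))$ is absolutely irreducible.

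Having secured both hypotheses of Corollary~\ref{criterion2} — namely that $\ad(\varphi_p(H))$ is absolutely irreducible and $|\varphi_p(H)|>f(n)$ (invoking Remark~\ref{Interpretfn} to read $f(n)$ as a bound on the largest element order) — I conclude $\varphi_p(H)=\SL(n,p)$, as desired. The main obstacle, or rather the point demanding care, is the clean interplay between the two interpretations of $f(n)$: \texttt{PrimesForOrder} is phrased in terms of maximal element order, whereas Corollary~\ref{criterion2} is stated for $|G|>f(n)$, and it is Remark~\ref{Interpretfn} that licenses using the element-order bound directly. I would make this dependence explicit rather than silently passing from element order to group order. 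Everything else is bookkeeping: matching the filtering condition in Step~2 to the definition of $\Pi(H)$ and confirming that the density hypothesis supplies the infiniteness needed for \texttt{PrimesForOrder} to be valid input.
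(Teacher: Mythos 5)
Your proof is correct and follows the same route as the paper: termination via Proposition~\ref{Rivin} (density gives the infiniteness and absolute irreducibility of $\ad(H)$ needed for the two auxiliary procedures to return finite sets), and correctness by showing $p\notin\mathcal{P}$ forces $\varphi_p(H)=\SL(n,p)$ through Lemma~\ref{CongAdIsAdCong} and Corollary~\ref{criterion2}. The paper's two-line proof leaves implicit the bookkeeping you spell out (in particular the role of Remark~\ref{Interpretfn}), but the underlying argument is identical.
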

\begin{proof}
Proposition~\ref{Rivin} implies that Step~1 terminates. 
Then $\varphi_p(H) = \SL(n, p)$ for any $p \notin \mathcal{P}$
by Corollary~\ref{criterion2} and Lemma~\ref{CongAdIsAdCong}.
\end{proof}

\subsection{Testing irreducibility}\label{testred}

Testing absolute irreducibility of $\ad(H)$ for $H$ of degree $n$ 
entails computation in degree about $n^4$, which is comparatively 
expensive. However, Theorem~\ref{criterion} offers a
way to bypass this test. That is, we adapt Meataxe 
ideas~\cite{HoltRees96,Parker84} to determine all primes modulo 
which the adjoint representation is merely reducible. 
For simplicity, the discussion will be restricted to  $R=\Z$.

Recall the following special case of Norton's criterion for 
the natural module $V$ of a matrix algebra $\calr$.

\begin{quote}
Suppose that $B\in \calr$ has rank $\rk(B)=n-1$. Assume that $v\calr=V$
for some non-zero $v$ in the nullspace of $B$, and $\calr w=V^\perp$
and for some non-zero $w^\top$ in the nullspace of $B^\top$. 
Then $V$ is irreducible. 
\end{quote}
Now let $\calr \subseteq \mathrm{Mat}(n,\Q)$ be a $\Z$-algebra, 
and suppose that the following hold.
\begin{enumerate}
\item
We have found $B\in \calr$ such that $\rk(B)=n-1$.
\item
For a non-zero $v$ in the nullspace of $B$, the 
$\Z$-span $v\calr$ contains $n$ linearly independent
vectors $v_1,\ldots , v_n$.
\item
For a non-zero $w^\top$ in the nullspace of $B^\top$, 
there are $n$ linearly independent vectors 
$w_1,\ldots,\allowbreak w_n\in \calr w$.
\end{enumerate}

Norton's criterion, applied to the above configuration
modulo $p$, shows that $\varphi_p(\calr)$ is irreducible
 unless
\begin{itemize}
\item[] $\rk(\varphi_p(B))<n-1$, or
\item[] $\varphi_p(v_1),\ldots,\varphi_p(v_n)$ are 
linearly dependent, or
\item[] $\varphi_p(w_1),\ldots,\varphi_p(w_n)$ are 
linearly dependent.
\end{itemize}

To find (a finite superset of) the set of primes $p$ for
which $\varphi_p(\calr)$ is reducible, we form the union of 
three sets, namely the prime divisors of $\det(M_1)$, $\det(M_2)$, 
and $\det(M_3)$, where
\begin{itemize}
\item[] $M_1$ is a full rank
$(n-1)\times(n-1)$ minor of $B$ (modulo other primes, $B$ has rank
$n-1$),
\item[] $M_2$ is the matrix with rows
$v_1,\ldots , v_n$ (modulo other primes, $v$ spans the whole module),
\item[] $M_3$ is the matrix with rows
$w_1,\ldots , w_n$.
\end{itemize}

To make this into a concrete test {\tt PrimesForIrreducible}, let
$\calr=\langle \ad(H)\rangle_\Z$.
Take a small number (say, $100$) of random $\Z$-linear combinations
$B\in \calr$ until a $B$ of rank $n-1$ is detected. Although we do 
not have a justification that such elements occur with sufficient 
frequency, they seem to (as observed in~\cite{ParkerAtlas10YearsOn});
in every experiment so far we found such a $B$.
(Also note that there are irreducible $H$ such that 
$\langle H \rangle_{\Q}$ does not have an element of rank $n-1$;
but if $H$ is absolutely irreducible then such elements always 
exist.)

We now state a version of {\tt PrimesNonSurjectiveSL} 
that may have improved performance in many situations
(see Section~\ref{5}).

\vspace{7pt}

\begin{obeylines}
{\tt PrimesNonSurjectiveSL}, modified.

\vspace{2pt}

1. If {\tt PrimesForIrreducible} confirms that $\ad(H)$ is irreducible then

\hspace{10pt} $\mathcal{P} :=$ {\tt PrimesForOrder}$(H,f(n))\, \cup$ {\tt PrimesForAbsIrreducible}$(H)$ 
\hspace{33pt} $\cup$ {\tt PrimesForIrreducible}$(\ad(H))$;

else 

\hspace{10pt} $\mathcal{P} :=$ {\tt PrimesForOrder}$(H,f(n))\, \cup$ {\tt PrimesForAbsIrreducible}$(\ad(H))$.

2. Return $\{p\in \mathcal{P}\mid \varphi_p(H)\not=\SL(n,p)\}$.
\end{obeylines}

\vspace{7pt}

\begin{proposition}
The above modification of {\tt PrimesNonSurjectiveSL}  
terminates, returning $\Pi(H)$ for input dense $H$.
\end{proposition}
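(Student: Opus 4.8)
The plan is to verify the two claims separately: termination, and correctness (that the returned set equals $\Pi(H)$). Termination reduces to showing that each subprocedure called in Step~1 halts and returns a finite set of primes. Both branches invoke {\tt PrimesForOrder}$(H,f(n))$ and a {\tt PrimesForAbsIrreducible} call; the \emph{if}-branch additionally invokes {\tt PrimesForIrreducible}$(\ad(H))$. Since $H$ is dense, Proposition~\ref{Rivin} guarantees $H$ is infinite, so Lemma~\ref{exp11} ensures {\tt PrimesForOrder}$(H,f(n))$ returns a finite set; this is exactly the termination argument used in Proposition~\ref{term}. The finiteness of the {\tt PrimesForAbsIrreducible} output follows from density via Corollary~\ref{7} (in the \emph{else} branch, applied to $\ad(H)$; in the \emph{if} branch, the call is to {\tt PrimesForAbsIrreducible}$(H)$, whose finiteness follows from $H$ being absolutely irreducible, itself a consequence of density). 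The {\tt PrimesForIrreducible} procedure always terminates by construction: it seeks a single $B\in\langle\ad(H)\rangle_\Z$ of rank $n-1$, then outputs the prime divisors of three determinants, a finite set. Hence Step~1 terminates in both branches, and Step~2 is a finite check.

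For correctness, I would argue that in each branch the set $\mathcal{P}$ computed in Step~1 contains every prime of $\Pi(H)$, i.e. every $p$ with $\varphi_p(H)\neq\SL(n,p)$; Step~2 then filters $\mathcal{P}$ down to exactly $\Pi(H)$, so it suffices to show $\Pi(H)\subseteq\mathcal{P}$. Equivalently, I show that for every $p\notin\mathcal{P}$ we have $\varphi_p(H)=\SL(n,p)$. Take $p\notin\mathcal{P}$. In the \emph{else} branch this is precisely the situation of Proposition~\ref{term}: since $p$ is excluded from both {\tt PrimesForOrder}$(H,f(n))$ and {\tt PrimesForAbsIrreducible}$(\ad(H))$, the order $|\varphi_p(H)|$ exceeds $f(n)$ and $\ad(\varphi_p(H))$ is absolutely irreducible (using Lemma~\ref{CongAdIsAdCong} to pass between $\ad(\varphi_p(H))$ and $\varphi_p(\ad(H))$), whence Corollary~\ref{criterion2} gives $\varphi_p(H)=\SL(n,p)$.

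The main obstacle is the correctness of the \emph{if} branch, where absolute irreducibility of $\ad(\varphi_p(H))$ must be reconstructed from the two \emph{weaker} pieces of data {\tt PrimesForAbsIrreducible}$(H)$ and {\tt PrimesForIrreducible}$(\ad(H))$. So suppose the \emph{if} branch is taken, meaning {\tt PrimesForIrreducible} has confirmed $\ad(H)$ is irreducible, and fix $p\notin\mathcal{P}$. Excluding $p$ from {\tt PrimesForIrreducible}$(\ad(H))$ ensures, via Norton's criterion applied modulo $p$, that $\varphi_p(\ad(H))=\ad(\varphi_p(H))$ is irreducible. Excluding $p$ from {\tt PrimesForAbsIrreducible}$(H)$ ensures $\varphi_p(H)$ is absolutely irreducible, so $\varphi_p(H)$ cannot be conjugated into a block-triangular form over $\overline{\F}_p$. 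I would then invoke the argument of Corollary~\ref{criterion2}: for an absolutely irreducible group, reducibility of $\ad$ over $\overline{\F}_p$ can arise only through a block-triangular structure, which absolute irreducibility of $\varphi_p(H)$ precludes; hence irreducibility of $\ad(\varphi_p(H))$ already forces it to be absolutely irreducible. Combined with $|\varphi_p(H)|>f(n)$ (from exclusion from {\tt PrimesForOrder}), Corollary~\ref{criterion2} yields $\varphi_p(H)=\SL(n,p)$. The delicate point to get right is precisely this upgrade from irreducible to absolutely irreducible for $\ad(\varphi_p(H))$, which is why {\tt PrimesForAbsIrreducible}$(H)$ rather than {\tt PrimesForAbsIrreducible}$(\ad(H))$ is the correct companion call in this branch.
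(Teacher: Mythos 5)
Your overall structure (termination, plus showing $p\notin\mathcal{P}\Rightarrow\varphi_p(H)=\SL(n,p)$ in each branch, with the \emph{else} branch reducing to Proposition~\ref{term}) matches the paper, but the step you yourself flag as ``the delicate point'' in the \emph{if} branch is a genuine gap. You claim that for an absolutely irreducible group $\varphi_p(H)$, reducibility of $\ad(\varphi_p(H))$ over $\overline{\F}_p$ ``can arise only through a block-triangular structure,'' so that irreducibility over $\F_p$ plus absolute irreducibility of $\varphi_p(H)$ upgrades $\ad(\varphi_p(H))$ to absolutely irreducible. That is a misreading of the proof of Corollary~\ref{criterion2}: that argument shows one implication only, namely that a block-triangular (non-absolutely-irreducible) $G$ yields an invariant subspace of the adjoint module; it does not classify the invariant subspaces of $\ad(G)$, and Proposition~\ref{absirrasch} exhibits plenty of absolutely irreducible proper subgroups (classes $\calc_2,\calc_3,\calc_4,\calc_7,\calc_8$) whose adjoint is nevertheless reducible. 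Nothing you cite rules out an $\F_p$-irreducible but not absolutely irreducible adjoint module for an absolutely irreducible $\varphi_p(H)$, so Corollary~\ref{criterion2} cannot be invoked as you do.

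The repair is immediate and is exactly what the paper's one-line proof points to: use Theorem~\ref{criterion} directly rather than Corollary~\ref{criterion2}. Theorem~\ref{criterion} bounds $|G|$ by $f(n)$ for any \emph{proper absolutely irreducible} $G\le\SL(n,p)$ with $\ad(G)$ merely \emph{irreducible} --- no absolute irreducibility of the adjoint is needed. In the \emph{if} branch, for $p\notin\mathcal{P}$ you have precisely this data: $\varphi_p(H)$ absolutely irreducible (from {\tt PrimesForAbsIrreducible}$(H)$), $\ad(\varphi_p(H))=\varphi_p(\ad(H))$ irreducible (from {\tt PrimesForIrreducible}$(\ad(H))$ via Norton's criterion and Lemma~\ref{CongAdIsAdCong}), and $|\varphi_p(H)|>f(n)$ (from {\tt PrimesForOrder}). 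Hence $\varphi_p(H)$ cannot be a proper subgroup, and the conclusion follows without the unjustified upgrade.
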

\begin{proof}
This follows from Theorem~\ref{criterion} and Proposition~\ref{term}.
\end{proof}

\begin{remark}
Suppose that {\tt PrimesForIrreducible} completes, i.e., $\ad(H)$ is 
confirmed to be irreducible. Then $H$ is dense if it is infinite and 
absolutely irreducible. This gives a more efficient density test 
than the procedure {\tt IsDenseIR2} in \cite{OWPreprint17}. 
\end{remark}

\subsection{Individual Aschbacher classes}\label{better}

Some Aschbacher classes may not occur in a given degree.
For example, the tensor product classes $\calc_4$ and 
$\calc_7$ are empty in degree $4$. 
Consonant with the approach of 
\cite{DensityFurther}, we show how to determine the primes $p$ such 
that $\varphi_p(H)$ lies in a group in 
$\calc_i\not \in \{\calc_4, \calc_7, \cals\}$,
using tests that do not involve $\ad(H)$.  
The following is vital. 
\begin{lemma}\label{NormalInDenseIsDense}
Let $H \leq \SL(n, \Q)$ be dense. 
If $N \unlhd H$ is non-scalar then $N$ is dense, thus
absolutely irreducible.
\end{lemma}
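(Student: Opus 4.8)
The plan is to prove the stronger statement that $N$ is dense; absolute irreducibility of $N$ (and of $\ad(N)$) then follows from density, e.g.\ via Proposition~\ref{Rivin}. The natural route is to pass to Zariski closures and show that the closure $\overline{N}$ of $N$ in $\SL(n,\C)$ must be all of $\SL(n,\C)$.

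First I would observe that $H$ normalizes $\overline{N}$. Indeed, for each $h\in H$ conjugation $x\mapsto hxh^{-1}$ is an automorphism of the algebraic group $\SL(n,\C)$, hence a homeomorphism in the Zariski topology; since it maps $N$ onto $N$, it maps $\overline{N}$ onto $\overline{N}$. Thus $H\le N_{\SL(n,\C)}(\overline{N})$. As $\overline{N}$ is Zariski closed, its normalizer is Zariski closed as well, so it contains the closure of $H$, which is $\SL(n,\C)$ because $H$ is dense. Therefore $\overline{N}\unlhd \SL(n,\C)$.

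Next I would invoke the structure of normal subgroups of $\SL(n,\C)$: since $\PSL(n,\C)$ is simple for $n\ge 2$ (equivalently, $\mathfrak{sl}(n,\C)$ is an irreducible $\SL(n,\C)$-module under the adjoint action), every proper normal subgroup of $\SL(n,\C)$ is central, i.e.\ consists of scalar matrices $\zeta 1_n$ with $\zeta^n=1$. Because $N$ is non-scalar and $N\subseteq\overline{N}$, the closed normal subgroup $\overline{N}$ is not contained in the center; hence $\overline{N}=\SL(n,\C)$ and $N$ is dense. By Proposition~\ref{Rivin}, $\ad(N)$ is absolutely irreducible, and since the enveloping algebra $\langle N\rangle_\C$ coincides with $\langle \overline{N}\rangle_\C=\mathrm{Mat}(n,\C)$, $N$ is absolutely irreducible on the natural module.

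The main obstacle is the passage from the hypothesis that $H$ normalizes $N$ to the conclusion that $\SL(n,\C)$ normalizes $\overline{N}$: this rests on the standard but essential facts that conjugation is a Zariski homeomorphism and that the normalizer of a closed subgroup is closed, together with the classification of normal subgroups of $\SL(n,\C)$. A purely module-theoretic substitute---showing $\langle N\rangle_\C=\mathrm{Mat}(n,\C)$ directly from the $\ad(H)$-invariance of the subalgebra $\langle N\rangle_\C$ (whose only $\ad(H)$-invariant values containing $1_n$ are $\C 1_n$ and $\mathrm{Mat}(n,\C)$, the former excluded since $N$ is non-scalar)---yields absolute irreducibility of $N$ on the natural module but not absolute irreducibility of $\ad(N)$, so it does not by itself deliver density; the closure argument seems to be the economical way to obtain the full conclusion.
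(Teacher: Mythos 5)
Your proof is correct, but it takes a genuinely different route from the paper's. The paper argues through congruence images: by Proposition~\ref{11}, $\varphi_p(H)=\SL(n,p)$ for almost all $p$; since $N$ is non-scalar, $\varphi_p(N)$ is a non-scalar normal subgroup of $\SL(n,p)$ for almost all $p$, hence (by simplicity of $\PSL(n,p)$ and perfectness of $\SL(n,p)$) equals $\SL(n,p)$, and Proposition~\ref{11}(iii) then gives density of $N$. You instead work directly over $\C$: the Zariski closure $\overline{N}$ is normalized by $H$, hence by $\overline{H}=\SL(n,\C)$ because normalizers of closed subgroups are closed, and simplicity of $\PSL(n,\C)$ forces $\overline{N}=\SL(n,\C)$. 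Both arguments are sound and both ultimately rest on a simplicity statement, just at different levels ($\PSL(n,p)$ versus $\PSL(n,\C)$). Your version has the advantage of not invoking strong approximation at all and of working for arbitrary (not necessarily finitely generated) $N$ and $H$, at the cost of importing two standard facts from algebraic group theory (conjugation is a Zariski homeomorphism; normalizers of closed subgroups are closed). The paper's version stays entirely inside the machinery it has already built and matches the way the lemma is actually used in the algorithms, namely to control $\varphi_p(N)$ prime by prime. Your closing remarks are also accurate: the purely module-theoretic argument via $\langle N\rangle_\C$ would only give absolute irreducibility of $N$ on the natural module, not density, so the closure (or congruence-image) argument is indeed needed for the full statement.
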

\begin{proof}
This follows from Proposition~\ref{11}: 
since $N$ is non-scalar, $\varphi_p(N)$ is a normal non-scalar 
subgroup of $\SL(n,p)$ for almost all primes $p$.
\end{proof}

\subsubsection{Testing imprimitivity}\label{C2} 

Suppose that $H\leq \GL(n,\F)$ is imprimitive, so 
$H \leq \GL(a,\F)\wr \mathrm{Sym}(b)$ 
for some $a$, $b>1$ such that $n=ab$. 
If $\mathrm{Sym}(b)$ has exponent $k$ then 
$\langle h^k : h\in H\rangle\leq \GL(a,\F)^b$
is reducible.  
Hence we have the following procedure.

\vspace{7pt}

\begin{obeylines}
{\tt PrimesForPrimitive}

\vspace{2pt}

Input: dense $H = \langle S \rangle\leq \SL(n, \Q)$.
Output: the set of primes $p$ for which $\varphi_p(H)$ is imprimitive.

\vspace{2pt}

1.  Select $h \in H$ such that $h^e$ is non-scalar, where $e =\exp(\mathrm{Sym}(n))$.
2. $\mathcal{P} := {\tt PrimesForAbsIrreducible}(h^e, S)$.
3. Return all $p \in \mathcal{P}$ such that $\varphi_p(H)$ is imprimitive.
\end{obeylines}

\vspace{9pt}

Once more \cite{gaprecog} is used in implementing the last step.
Lemma~\ref{NormalInDenseIsDense}
guarantees termination and correctness of the  output. 

If we happen to know a prime $p$ such that 
$\varphi_p(H) = \SL(n, p)$, then {\tt PrimesForPrimitive} 
simplifies in the familiar way (i.e., by computing 
in a congruence image and then lifting). 

\vspace{7pt}

\begin{obeylines}
{\tt PrimesForPrimitive}, modified.

\vspace{2pt}

1. Let $p$ be a prime for which $\varphi_p(H)=\SL(n,p)$.
2. Find $n^2$ elements $h_i\in H$ such that the $\varphi_p(h_i^k)$ span $\mathrm{Mat}(n,\F_p)$, where $k :=\exp(\mathrm{Sym}(n))$.
3. Return all $p \in$ {\tt PrimesForAbsIrreducible}$(h_1^k, \ldots , h_{n^2}^k)$ such that $\varphi_p(H)$ is 
\ \ \ \  imprimitive.
\end{obeylines}

\vspace{9pt}

The $h_i$ exist by Step~1 and Lemma~\ref{NormalInDenseIsDense}.

\subsubsection{Testing for field extensions}\label{C3} 
The second derived subgroup $G^{(2)}$ of $G \in \calc_3$
is quasisimple and reducible
(\cite[p.~66]{Bray} and \cite[\S4.3]{KleidmanLiebeck}).
Accordingly, {\tt PrimesForReducibleSecondDerived} 
selects a non-scalar double commutator $g$ in the dense 
group $H$ then returns  {\tt PrimesForAbs}-{\tt Irreducible}$(g, S)$.
By Lemma~\ref{NormalInDenseIsDense}, this will yield 
all primes modulo which $H$ is in a group in $\calc_3$.

If we know a prime $p$ such that $\varphi_p(H) = \SL(n, p)$
then {\tt PrimesForReducibleSecond}-{\tt Derived} can be modified 
along the lines of our modification of {\tt PrimesForPrimitive}. 
We search for double commutators (rather than $k$th powers)
in $\varphi_p(H)$ that span $\mathrm{Mat}(n,\F_p)$;
these exist because $\varphi_p(H)=\SL(n,p)$ 
is perfect (if $n>2$ or $p>3$).

\subsubsection{Excluding classes}
For prime $n$ or $n=4$,   
the results of Sections~\ref{C2} and \ref{C3}, 
together with those of \cite{DensityFurther}, enable
us to avoid $\ad(H)$ in computing $\Pi(H)$.
We use the procedures below
to rule out individual Aschbacher classes
in those degrees.

\vspace{2pt}

\begin{description}
\item[$\calc_1$] {\tt PrimesForAbsIrreducible}. 
\item[$\calc_2$] {\tt PrimesForPrimitive}.
\item[$\calc_3$] {\tt PrimesForReducibleSecondDerived}.
\item[$\calc_6$, $\cals$] {\tt PrimesForOrder}.
\item[$\calc_8$] {\tt PrimesForSimilarity}, as in
\cite[Section 2.5]{DensityFurther}.
\end{description}

\section{Experiments}\label{5}

Our algorithms have been implemented in {\sf GAP}~\cite{Gap},
enhancing previous functionality for computing with dense 
groups~\cite{OWPreprint17}. 
The software can be accessed at 

\url{http://www.math.colostate.edu/~hulpke/arithmetic.g} 

\vspace{2pt}

We report on experiments undertaken 
with the implementation.
One major task is computing all congruence quotients of a
finitely generated dense group $H\leq \SL(n,\Z)$ from $\Pi(H)$,
as explained in \cite[Section~4.1]{DensityFurther}. 

\subsection{Explicit order bounds}\label{bounds}

We will let $f(n)$ be a bound on the largest element 
order for the absolutely irreducible groups of degree $n$ in 
$\calc_6\cup \cals$ that are irreducible in their adjoint 
representation. The tables in \cite[Section~8]{Bray} 
furnish bounds for $n\le 12$. We construct an example of each 
such group in $\calc_6\cup\cals$ using the {\sc Magma}~\cite{Magma} 
implementation that accompanies \cite{Bray}. Then we use 
{\sf GAP} to calculate conjugacy class representatives 
and their orders.

For completeness, Table~\ref{tab1} gives maximal subgroup order, 
maximal element order, and the 
least common multiple of exponents.
The column `Geometric' lists the number $i$ of each
Aschbacher class $\calc_i$ that can occur.

We include, for degrees $n\in \{ 3,4,5,7,11\}$, the element order 
bounds from~\cite{DensityFurther} for {\em all} groups in $\calc_6\cup\cals$. 
The rows with these bounds have $n\cals$ in the Degree column. 
For $n = 3,4,5$ the bounds agree, and so we have omitted the row 
beginning with $n$.

\begin{table}[htb]
\begin{center}
\begin{tabular}{l|c|r|r|r}
Degree
&Geometric
&Group Order
&Element order
&Exponent lcm
\\
\hline
$3\cals$&$1,2,3,6,8$&$1080$&$21$&$1260$\\
$4\cals$&$1,2,3,6,8$&$103680$&$36$&$2520$\\
$5\cals$&$1,2,3,6,8$&$129600$&$60$&$3960$\\
$6$&$1,2,3,4,8$&$39191040$&$60$&$2520$\\
$7$&$1,2,3,6,8$&$115248$&$56$&$168$\\
$7\cals$&&$115248$&$84$&$168$\\
$8$&$1,2,3,4,6,8$&$743178240$&$120$&$5040$\\
$9$&$1,2,3,6,7,8$&$37791360$&$90$&$360$\\
$10$&$1,2,3,4,8$&$4435200$&$120$&$9240$\\
$11$&$1,2,3,6,8$&$244823040$&$198$&$637560$\\
$11\cals$&&$244823040$&$253$&$637560$\\
$12$&$1,2,3,4,8$&$5380145971200$&$156$&$360360$\\
\end{tabular}
\end{center}

\medskip

\caption{Order bounds in small degrees}
\label{tab1}
\end{table}

\subsection{Implementation and experimental results}

\subsubsection{Triangle groups} 
Let $\Delta(3,3,4)$ be the triangle group 
$\langle a,b\mid a^3=b^3=(ab)^4=1\rangle$.
In \cite[Theorem~1.1]{LongLatest}, a four-dimensional 
real representation of $\Delta(3,3,4)$ is defined by
\[
\rho_k(a) = \left(
\renewcommand{\arraycolsep}{.2cm}
\begin{array}{cccc}
k(3-4k+4k^2) & -1-4k-8k^2+16k^3-16k^4 & 0 & 0\\
\vspace{-8pt}  \\
1-k+k^2 & -1-3k+4k^2-4k^3 & 0 & 0\\
\vspace{-8pt}  \\
k(1-2k+2k^2) & -3-4k-2k^2+8k^3-8k^4 & 1 & 0\\
\vspace{-8pt}  \\
2(1-k+k^2) & -2(1+2k-4k^2+4k^3) & 0 & 1\end{array}
\right),
\]
\[
\rho_k(b) = \left(\begin{array}{ccrr}
1 & \, 0&-4 & 0\\
0 & \, 1 & 0 &-1\\
0 & \, 0&-1 &-1\\
0 & \, 0 & 1 & 0\end{array}\right).
\] 
Let $H(k)= \langle \rho_k(a), \rho_k(b)\rangle$. 
If $k\in \Z$ then $H(k)\leq \SL(4,\Z)$.

Let $F(k)$ be the image under $\rho_k$ of 
$\langle [a,b], [a, b^{-1}] \rangle$. 
Calculations by D.~F.~Holt (personal communication) using 
{\sf kbmag}~\cite{kbmag}
establishes that the latter is a free subgroup of $\Delta(3,3,4)$. 
All  groups $H(k)$ (resp. $F(k)$) are $2$-generated, and 
of the same structure; as $k$ varies  
we are just changing the size of matrix entries.
Note that the entries of the generators of
$F(k)$ have roughly twice the number of digits as those of $H(k)$.
Our experiments justify that $H(k)$, $F(k)$ are dense 
(for $H(k)$ this follows independently from 
\cite{LongLatest}), and non-arithmetic, i.e., thin. As 
$\calc_4$ and $\calc_7$ do not figure in degree $4$, the 
algorithm from Section~\ref{better} can be utilized here. 
This will illustrate the benefit of the improvements in 
Sections~\ref{testred} and \ref{better}.

In Table~\ref{TableY}, $M$ is the level of $\mathrm{cl}(H)$ and
`Index' is $|\SL(4, \Z) : \mathrm{cl}(H)|$.
We remark that computing $\Pi(H)$, $M$, and indices  
is not possible with our previous 
methods~\cite{DensityFurther,Density}.
Other columns give runtimes in seconds on a 3.7GHz Xeon E5 (2013 MacPro).
Column $t_A$ gives the runtime of {\tt PrimesNonSurjectiveSL}.
Column $t_I$ gives the time of the Meataxe-based algorithm 
from Section~\ref{testred}. 
Due to the randomized nature of the Meataxe calculations, 
timings turned out to be 
variable. Consequently we give a timing
of ten experiments and list minimum, maximum,
and average runtime in the format min--max; average.
Column $t_B$ gives runtimes of the algorithm in
Section~\ref{better} (computing $\Pi(H)$ without $\ad(H)$), 
and the final column $t_M$ is runtime to compute $M$
and Index from $\Pi(H)$.

\begin{table}[h]
\begin{tabular}{l|r|r|r|r|r|r}
$H$
&$M$
&Index
&$t_A$
&$t_I$
&$t_B$
&$t_M$
\\
\hline
$H(1)$
&$2^{5}7^{2}$
&$2^{41}3^{3}5^{3}7^{6}19$
&$63$
&$7{-}69; 27$
&$4$
&$7$
\\
$H(2)$
&$2^{3}313$
&$2^{17}3^{2}5^{2}13
{\cdot}97
{\cdot}101
{\cdot}181^{2}$
&$54$
&$10{-}104; 30$
&$7$
&$1373$
\\
$H(3)$
&$2^{5}7
{\cdot}199$
&$2^{43}3^{6}5^{3}7
{\cdot}11
{\cdot}19
{\cdot}13267
{\cdot}19801$
&$62$
&$9{-}90; 43$
&$7$
&$334$
\\
$H(4)$
&$2^{3}7
{\cdot}607$
&$2^{21}3^{5}5^{5}7
{\cdot}13
{\cdot}19
{\cdot}101
{\cdot}7369
{\cdot}9463$
&$90$
&$22{-}65; 37$
&$19$
&$5938$
\\
$H(5)$
&$2^{5}5^{2}409$
&$2^{44}3^{3}5^{6}17
{\cdot}31
{\cdot}55897
{\cdot}83641$
&$73$
&$13{-}107; 48$
&$11$
&$2883$
\\
$H(6)$
&$2^{3}7
{\cdot}31
{\cdot}97$
&$2^{27}3^{7}5^{5}7
{\cdot}13
{\cdot}19
{\cdot}37$\quad\ &$85$
&$14{-}144; 63$
&$7$
&$308$
\\
&&
\qquad
${\cdot}331{\cdot}941{\cdot}3169$
&&&&\\
$H(10)$
&$2^{3}5^{2}7
{\cdot}919$
&$2^{26}3^{8}5^{8}7^{2}13
{\cdot}17
{\cdot}19^{2}31$\quad\ &$93$
&$67{-}390; 235$
&$14$
&$30382$
\\
&&
\qquad
${\cdot}37
{\cdot}101
{\cdot}113
{\cdot}163$
&&&&\\
$F(1)$
&$2^{5}3^{2}7^{2}$
&$2^{53}3^{8}5^{4}7^{6}19$
&$77$
&$595{-}707; 645$
&$3$
&$16$
\\
$F(2)$
&$2^{4}3^{2}7
{\cdot}13
{\cdot}313$
&$2^{38}3^{9}5^{6}7
{\cdot}13
{\cdot}17
{\cdot}97
{\cdot}101
{\cdot}181^{2}$
&$78$
&$689{-}831; 750$
&$11$
&$5986$
\\
$F(3)$
&$2^{5}3^{2}7
{\cdot}29$
&$2^{62}3^{15}5^{6}7^{3}11
{\cdot}19
{\cdot}67
{\cdot}137$\quad\ &$106$
&$718{-}851; 769$
&$10$
&$10094$
\\
&
\qquad${\cdot}37
{\cdot}199$
&
\qquad
${\cdot}421
{\cdot}13267
{\cdot}19801$
&&&&\\
$F(4)$
&$2^{4}3^{3}7
{\cdot}59
{\cdot}607$
&$2^{37}3^{15}5^{7}7
{\cdot}13
{\cdot}19
{\cdot}29
{\cdot}101$\quad\ &$102$
&$719{-}899; 798$
&$19$
&$74079$
\\
&&
\qquad
${\cdot}1741
{\cdot}7369
{\cdot}9463$
&&&&\\
$F(5)$
&$2^{5}3^{3}5^{2}7$
&$2^{66}3^{15}5^{10}7
{\cdot}17
{\cdot}31
{\cdot}2521$\quad\ &$139$
&$700{-}1010; 881$
&$27$
&$129470$
\\
&\qquad${\cdot}71{\cdot}409$
&
\qquad
${\cdot}55897
{\cdot}83641$
&&&&\\
\end{tabular}

\bigskip

\caption{Experimental data for the groups $H(k), F(k)\leq \SL(4,\Z)$}
\label{TableY}
\end{table}

After computing $M$, we can find all congruence quotients 
of $H(k)$, and hence a set of finite quotients of $\Delta(3,3,4)$.
We see from the results for $k=1$, $2$ that $\Delta(3,3,4)$ 
has quotients $\mbox{PSL}(4, p)$ for $p>2$. On the other hand, 
a calculation with the {\sf GAP} operation {\tt GQuotients} 
shows that $\Delta(3,3,4)$ has no quotient isomorphic to 
$\mbox{PSL}(4,2)$.
Furthermore, since  $\Delta(3,3,4)$ has quotients isomorphic to 
$\mathrm{Alt}(10)$, which cannot be a section of a matrix 
group of degree $4$ over a finite field, 
$H(k)$ is thin for all $k \in \Z$.
The $F(k)$ are thin because they are free.

\subsubsection{Other experiments}\label{exp} 
We used the following constructions of dense groups, including 
examples that permit tensor decomposition modulo some primes.
\begin{itemize}
\item[(i)]  
Let $K(a,b,m)$ be the subgroup of $\SL(ab,\Z)$ generated by 
$\SL(a,\Z)\otimes \SL(b,\Z)$
and the elementary matrix $m t_{1,a+1}$ (two generators 
per factor of the Kronecker product).
\item[(ii)] For distinct monic polynomials 
$p(x),q(x)\in\Z[x]$ of equal degree $n$, let 
$C(p,q)$ be the subgroup of $\SL(n,\Z)$ generated by the 
companion matrices $C_p$ and $C_q$ for $p(x)$ and 
$q(x)$. 
\end{itemize}
Regarding density of the $K(a,b,m)$, cf.~\cite[Lemma~3.15]{DensityFurther}.
By \cite[Theorem~1.5]{Rivin1}, $C(p,q)$ is dense if it is non-abelian, 
$C_q$ has infinite order, and $p(x)$ is irreducible with Galois 
group $\mathrm{Sym}(n)$.

The runtimes in Table~\ref{TableZ} 
have the same interpretation as in Table~\ref{TableY}. 
Some computations with the larger groups did not complete 
for several hours. 
In that event, the pertinent column entry is blank. 
Indices are not listed for space reasons.

\begin{table}[htb]
\begin{tabular}{l|r|r|r|r|r|r}
Group
&Degree
&Primes
&$M$
&$t_A$
&$t_I$
&$t_M$
\\
\hline
$K(2,2,275)$
&$4$
&$5,11$
&$5^{2}11$
&$101$
&$1{-}3;1$
&$8$
\\
$K(2,3,441)$
&$6$
&$3,7$
&$3^{3}7^{2}$
&$37951$
&$4{-}47;17$
&$107$
\\
$K(3,2,8959)$
&$6$
&$17,31$
&$17^{2}31$
&$39873$
&$8{-}43;28$
&$3946$
\\
$K(2,4,100)$
&$8$
&$2,5$
&$2^{4}5^2$%
&&$17{-}96;53$
&$956$\\
$K(3,3,11979)$
&$9$
&$3,11$
&$3^{3}11^3$%
&&$81{-}246;180$
&$4283$\\
$C(x^4{-}x+1, x^4+5x^3{-}x^2+1)$
&$4$
&$11,61$
&$11
{\cdot}61$
&$58$
&$3{-}26;8$
&$2131$
\\
$C(x^6+2x^4+x+1,x^6-x^2+1)$
&$6$
&$7,23$& 
&&$12{-}305;73$
&\\
$C(x^8+x+1,x^8-x+1)$
&$8$
&$2$
&$2^{2}$
&&
$52{-}368;150$&$10$
\\
$C(x^8+2x+1,x^8+x^4+1)$
&$8$
&$2,3,5$
&$2^{4}3{\cdot}5$&&
$33{-}1982;505$&$35813$
\\
\end{tabular}

\bigskip

\caption{Experimental data for the groups $K(a,b,m)$ and
$C(p,q)$}
\label{TableZ}
\end{table}

\subsubsection{Performance}
The runtime to find $\Pi(H)$ is roughly
proportional to the magnitudes of its elements.
In fact, runtime is dominated by tests to ensure 
that no prime $p$ returned is a false positive, 
i.e., that the $p$-congruence image 
really is a proper subgroup of $\SL(n,p)$.

The timings show that the method of Section~\ref{better} 
is clearly superior to the default, with the Meataxe-based 
algorithm performing better unless matrix entries become 
very large. This pattern becomes more pronounced in larger 
degrees.

\subsection*{Acknowledgments}
We thank Mathematisches Forschungsinstitut Oberwolfach 
for facilitation of our work through 
the programme `Research in Pairs' in 2018.
A.~S.~Detinko was supported by Marie Sk\l odowska-Curie Individual 
Fellowship grant H2020 MSCA-IF-2015, no.~704910 (EU Framework 
Programme for Research and Innovation). 
A.~Hulpke was supported by Simons Foundation Collaboration Grant no.~524518 
and National Science Foundation grant DMS-1720146.

\bibliographystyle{amsplain}

\end{document}